\newcommand{\red}[1]{{\color{red}{#1}}}
\newcommand{\onto}{\twoheadrightarrow}
\newcommand{\Z}{\mathbb{Z}}
\newcommand{\Q}{\mathbb{Q}}
\newcommand{\C}{\mathbb{C}}
\newcommand{\unit}{\mathbbm{1}} 
\newcommand{\CC}{\mathcal{C}}
\newcommand{\DD}{\mathcal{D}}
\newcommand{\EE}{\mathcal{E}}
\newcommand{\FF}{\mathcal{F}}
\newcommand{\GG}{\mathcal{G}}
\newcommand{\JJ}{\mathcal{J}}
\newcommand{\KK}{\mathcal{K}}
\newcommand{\PP}{\mathcal{P}}
\newcommand{\VV}{\mathcal{V}}
\newcommand{\Neg}{\mathcal{N}}
\newcommand{\tr}{tr}
\newcommand{\id}{id}
\newcommand{\ldag}{\langle}
\newcommand{\rdag}{\rangle^\dagger}
\newcommand{\ol}{\overline}
\newcommand{\dd}{\mathfrak{d}}
\newcommand{\ee}{\mathfrak{e}}
\renewcommand{\gg}{\mathfrak{g}}
\newcommand{\hh}{\mathfrak{h}}
\renewcommand{\sl}{\mathfrak{sl}}
\newcommand{\so}{\mathfrak{so}}
\newcommand{\cat}[1]{\ol{\Rep(U_{#1}(\gg_2))}}
\DeclareMathOperator{\Rep}{Rep}
\DeclareMathOperator{\Hom}{Hom}
\DeclareMathOperator{\GPA}{GPA}
\DeclareMathOperator{\End}{End}
\DeclareMathOperator{\Vecc}{Vec}
\DeclareMathOperator{\Ab}{Ab}
\DeclareMathOperator{\Idemp}{Idemp}
\DeclareMathOperator{\Add}{Add}
\DeclareMathOperator{\Right}{Right}
\DeclareMathOperator{\Dec}{Dec}
\renewcommand{\phi}{\varphi}
\let\epsilon\varepsilon 
\newcommand{\blank}{\rule{0.2cm}{0.15mm}}
\theoremstyle{plain}
    \newtheorem{definition}{Definition}
    \newtheorem{proposition}{Proposition}
    \newtheorem{theorem}{Theorem}
    \newtheorem{lemma}{Lemma}
    \newtheorem{corollary}{Corollary}
    \newtheorem{remark}{Remark}
\newcommand{\skein}[2]{\raisebox{-.4\height}{ \includegraphics[scale = #2]{figs/#1.png}}}
\title{Type $G_2$ Quantum Subgroups from Graph Planar Algebra Embeddings}
\author{Caleb Kennedy Hill}
\address{Caleb Kennedy Hill\\
University of New Hampshire\\
Durham, 
New Hampshire}
\email{caleb.hill@unh.edu}
\date{}
\begin{document}

\begin{abstract}
    We give graphical presentations for the two quantum subgroups of type $G_2$.
    To do this we use a method of extending a tensor category by embedding the
    planar algebra of a $\otimes$-generating object into the graph planar algebra
    of this object's fundamental graph.
    This allows the use of computational methods to uncover relations 
    we would have little hope of arriving at otherwise.
\end{abstract}
\maketitle


\section{Introduction}\label{sec:intro}
Quantum subgroups are a well-known source of tensor categories.
More precisely, given a conformal embedding $\VV(\gg,k) \subseteq \VV(\hh,1)$ of VOAs 
as in \cite{DMNO}, one obtains a corresponding etale algebra $A$ in $\ol{\Rep(U_{q}(\gg))}$,
where $q$ depends on $k$..
This algebra then allows one to consider the category $\ol{\Rep(U_{q}(\gg))}_A$ of right $A$-modules
Commutativity of $A$ then gives a tensor product on $\ol{\Rep(U_{q}(\gg))}_A$, and one may study this
new category in its own right.
The free functor gives an embedding $\ol{\Rep(U_{q}(\gg))} \hookrightarrow \ol{\Rep(U_{q}(\gg))}_A$
which is, in general, not full.
Hence it remains to find a description of the new morphisms in $\ol{\Rep(U_{q}(\gg))}_A$ 
to describe this newly constructed category of modules.
Recent works of Edie-Michell and Snyder \cite{cain_noah} have used this reasoning, and 
representation theoretic techniques to give diagrammatic descriptions of tensor categories of modules
corresponding to the family of conformal embeddings $\VV(\sl_N,N) \subseteq \VV(\so_{N^2-1},1)$.

On the other hand, one may start with a known category $\CC$ and compute an embedding 
\[
    \CC \hookrightarrow \DD
\]
for $\DD$ some category in which explicit calculations are more easily performed by computers.
One option for $\DD$ is the graph planar algebra $\GPA(\Gamma)$ for some graph $\Gamma$.
By the GPA Embedding Theorem \cite{extended_haagerup}, an embedding
\[
    \CC \hookrightarrow \GPA(\Gamma)
\]
induces a module category for $\CC$.
This has been done for $\ol{\Rep(U_q(\sl_N))}$ in \cite{Cain_Dan} and for the 
extended Haagerup categories in \cite{extended_haagerup}.

The present work describes a blend of these two techniques, based on the two conformal embeddings
\begin{equation}\label{eq:conf-embs}
    \VV(\gg_2,3) \subseteq \VV(\ee_6,1) \quad\text{and}\quad \VV(\gg_2,4) \subseteq \VV(\dd_7,1).
\end{equation}
We begin by finding an embedding of the well-known trivalent category $\GG_2(q)$ of \cite{Kuperberg,tricats}
into the graph planar algebra on the module fusion graph of the object $V_{\Lambda_1}$
in $\ol{\Rep(U_q(\gg_2))}_A$.
The category $\GG_2(q)$ is known to be a diagrammatic presentation for $\ol{\Rep(U_q(\gg_2))}$.
Through the free functor we can view our GPA embedding as a GPA embedding for a $\otimes$-generating 
object's planar algebra in $\ol{\Rep(U_q(\gg_2))}_A$.
Diagrammatically, this gives us a black-strand and a trivalent vertex:
\[
    \skein{/skein_figs/trivalent}{0.14}
\]
with a known skein theory.
We then search inside the GPA embedding for new morphisms.
According to \cite{DMNO} there ought to be a projection onto a $\Z_k$-like simple object in $\ol{\Rep(U_q(\gg_2))}_A$,
so this is what we search for inside the GPA.
We view this new morphism as an I with an oriented red vertical strand:
\[
    \skein{/skein_figs/dec_tet_5}{0.15}       
\]
In the case of $\GG_2(q)$, the properties of this new morphism were unknown beyond a few basics deriving from, e.g.,
the fact that is is a projection onto a simple object contained in the tensor square of the $\otimes$-generating object.
Once we have our hands on the image in the GPA of this projection, though, 
we may explore its properties through explicit computations.
We perform this process of extending GPA embeddings for the conformal embeddings
\begin{equation*}
    \VV(\gg_2,3) \subseteq \VV(\ee_6,1) \quad\text{and}\quad \VV(\gg_2,4) \subseteq \VV(\dd_7,1)
\end{equation*}
mentioned above.
By \cite{gannon2025_exotic_q_subgroups_extensions} these conformal embeddings correspond to 
the only etale algebra objects in $\ol{\Rep(U_q(\gg_2))}$.

Now we begin by introducing some notation for a skein theory involving an oriented, red strand 
in addition to unoriented black strands.
A diagram which consists of black unoriented strands as well as red oriented strands will be
referred to as a {\bf decoration} of the diagram given by dropping all red strands;
such a diagram is called {\bf decorated}.
\begin{definition}
    For a diagram $\EE$ the notation $\Right^i(\EE)$ means an $i$-click right rotation. 
    For instance, 
    \[
    \Right^1\left( \skein{/skein_figs/dec_tet_4}{0.1} \right) = \skein{/skein_figs/dec_tet_3}{0.1} 
    \quad\text{and}\quad 
    \Right^2\left( \skein{/skein_figs/dec_tet_4}{0.1} \right) = \skein{/skein_figs/dec_tet_4}{0.1}.
    \] 

    Suppose the diagram $\EE$ has $m$ boundary points. 
    We define $\Dec_i(\EE)$ to be the $i$-th external single clockwise decoration of $\EE$. 
    For example,
    \[
        \Dec_1\left(\skein{/skein_figs/trivalent}{0.1}\right) = \skein{/skein_figs/triv_rightDown}{0.1},
    \quad
    \text{and} 
    \quad
        \sum_{i=1}^{3} i\Dec_i\left(\skein{/skein_figs/trivalent}{0.1}\right) 
        = \skein{/skein_figs/triv_rightDown}{0.1} 
        +2\skein{/skein_figs/triv_bottomLeft}{0.1} 
        +3\skein{/skein_figs/triv_leftUp}{0.1}
    \]
    We adopt the convention that $\Dec_0 (\EE) = \EE$.
\end{definition}

Both of the categories studied in this paper are extensions of trivalent categories by a red, directed, $\Z_n$-like strand. 
Here we define the class of categories we will be working with.
Later we will show that, with a relatively tame assumption on the underlying skein theory, 
categories in this class are evaluable in general.
\begin{definition}\label{def:zn-ext}
    Let $\CC = \left\langle \skein{/skein_figs/trivalent}{0.1} \right\rangle$ 
    be a trivalent category. 
    Call $\DD$ a {\bf $\Z_n$-like extension} (or {\bf cyclic} when $n$ is understood) of $\CC$ if we have
    $\DD = \left\langle \skein{/skein_figs/trivalent}{0.1}, \skein{/skein_figs/dec_tet_5}{0.1} \right\rangle$,
    enjoying the following relations 
    \begin{equation*}\tag{Recouple}
        \skein{/skein_figs/recouple_LHS}{0.1} 
        = \skein{/skein_figs/recouple_RHS}{0.1}
    \end{equation*}

    \begin{equation*}\tag{Reverse}
        \skein{/skein_figs/down_n-1}{0.1} 
        = \skein{/skein_figs/dec_tet_5}{0.15}
    \end{equation*}

    \begin{equation*}\tag{Schur 0}
        \skein{/skein_figs/schur_0_1}{0.1} = 0 \quad 
        \skein{/skein_figs/schur_0_2}{0.1} = 0 \quad \cdots \quad 
        \underbrace{ \skein{/skein_figs/schur_0_n-1}{0.11} }_{\text{$n-1$}} = 0
    \end{equation*}

    \begin{equation*}\tag{Schur 1}
        \skein{/skein_figs/schur_1_1}{0.1} = 0 \quad 
        \skein{/skein_figs/schur_1_2}{0.1} = 0 \quad \cdots \quad 
        \underbrace{ \skein{/skein_figs/schur_1_n-1}{0.11} }_{\text{$n-1$}} = 0
    \end{equation*}

    \begin{equation*}\tag{Swap}
        \skein{/skein_figs/swap_LHS}{0.15} 
        = \omega \skein{/skein_figs/swap_RHS}{0.15}
    \end{equation*}

    \begin{equation*}\tag{decStick}
        \skein{/skein_figs/stick_w_Pg}{0.25} 
        = \skein{/skein_figs/stick}{0.1}
    \end{equation*} 

    \begin{equation*}\tag{decBigon}
        \skein{/skein_figs/g_bigon2}{0.1} = b \skein{/skein_figs/stick}{0.15}
    \end{equation*}

    \begin{equation*}\tag{Change of Basis}
        \skein{/skein_figs/triv_leftUp}{0.1} 
        = \sum_{i=0}^{n-1} r_i \skein{/skein_figs/triv_rightUp_i}{0.1}
    \end{equation*}

    \begin{equation*}\tag{decTrigon}
        \skein{/skein_figs/dec_trigon_LHS}{0.1} 
        = \sum_{i=0}^{n-1} t_i \skein{/skein_figs/triv_rightUp_i}{0.1}
    \end{equation*}\label{eq:decTrigon}
 
    \begin{equation*}\tag{decTetragon}
         \skein{/skein_figs/dec_tet_LHS}{0.1} 
         = \sum_{i=0}^4 \sum_{j=0}^3 u_{i,j} \Dec_i\left( \Right^j\left( \skein{/skein_figs/dec_tet_1}{0.1} \right) \right)
         + \sum_{i=0}^4 \sum_{j=0}^3 v_{i,j} \Dec_i\left( \Right^j\left( \skein{/skein_figs/dec_tet_4}{0.1} \right) \right)
    \end{equation*}

     \begin{equation*}\tag{decPentagon}
        \skein{/skein_figs/dec_pent_LHS}{0.1} 
        = \sum_{i=0}^5 \sum_{j=0}^4 w_{i,j} \Dec_i\left( \Right^j\left( \skein{/skein_figs/dec_pent_RHS1}{0.1} \right) \right) 
        + \sum_{i=0}^5 \sum_{j=0}^4 x_{i,j} \Dec_i\left( \Right^j\left( \skein{/skein_figs/dec_pent_RHS2}{0.1} \right) \right)
    \end{equation*}
    for $c, \omega, b, r_i, t_i, u_{i,j}, v_{i,j}, w_{i,j}, x_{i,j} \in \C$.
    We additionally enforce the condition that the diagrams $\skein{/skein_figs/triv_rightUp_i}{0.075}$ 
    for $i=0,\dots,n-1$ span the $2\to1$ hom-space, where we adopt the convention that 
    an oriented strand labelled with $i$ refers to the presence of $i$
    parallel oriented strands.
\end{definition}

\begin{definition}\label{def:D4}
    Set $q_4 = e^{\frac{2\pi i}{48}}$ and define $\DD_4$ to be the 
    cyclic extension of $\GG_2(q_4)$ with the following structure constants:\footnote{
        We omit the (decPentagon) equation here for brevity. 
        It contains 44 nonzero summands, and can be found on the author's GitHub.
    } 
    
    \begin{equation*}
        \skein{/skein_figs/swap_LHS}{0.1} = - \skein{/skein_figs/swap_RHS}{0.1}, \qquad
        \skein{/skein_figs/g_bigon2}{0.1} = (q_4+1+q_4^{-1}) \skein{/skein_figs/stick}{0.1}
    \end{equation*}

    \begin{equation*}
        \skein{/skein_figs/triv_leftUp}{0.1} =  
        q_4^{-4} \skein{/skein_figs/trivalent}{0.1} 
        + q_4^{16} \skein{/skein_figs/triv_rightUp}{0.1}
    \end{equation*}

    \begin{equation*}
        \skein{/skein_figs/dec_trigon_LHS}{0.1} = 
        -\skein{/skein_figs/trivalent}{0.1} 
        -\skein{/skein_figs/triv_rightUp}{0.1}
    \end{equation*}

    \begin{align*}
         \skein{/skein_figs/dec_tet_LHS}{0.12} & = q^2 \skein{/skein_figs/dec_tet_1}{0.1} + q_4^2 \skein{/skein_figs/dec_tet_2}{0.1} 
            + \frac{q_4^{17}}{q-q^{-1}} \skein{/skein_figs/dec_tet_3}{0.1} + q_4^2 \skein{/skein_figs/dec_tet_4}{0.1} \\
        & + \frac{1+[3]_{q_4}}{q_4^4} \skein{/skein_figs/dec_tet_5}{0.1} + \frac{[2]_{q_4}}{q_4^{13}} \skein{/skein_figs/dec_tet_6}{0.1}
            + q_4^{-14} \skein{/skein_figs/dec_tet_8}{0.1} + \frac{[2]_{q_4}}{q_4^{13}} \skein{/skein_figs/dec_tet_9}{0.1}
            + (-1) \skein{/skein_figs/dec_tet_10}{0.1}
    \end{align*}

\end{definition}

One of the two primary results we give here is that $\DD_4$ is a presentation for the category of modules corresponding to the
level 4 conformal embedding of $\gg_2$.
\begin{theorem}
    There is an equivalence
    \[
        \Ab(\ol{\DD_4}) \cong \ol{\Rep(U_{q_4}(\gg_2))}_{A_4}
    \]
    where $A_4$ is the algebra object corresponding to the level-4 conformal embedding given in Equation~\ref{eq:conf-embs}.
\end{theorem}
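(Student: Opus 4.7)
The plan is to construct a monoidal functor
\[
    F \colon \DD_4 \longrightarrow \ol{\Rep(U_{q_4}(\gg_2))}_{A_4}
\]
and then show it induces the desired equivalence after additive and idempotent completion. On the generators, $F$ sends the black strand to the free $A_4$-module on the fundamental representation $V_{\Lambda_1}$, the trivalent vertex to the image under the free functor of the standard $\GG_2(q_4)$-trivalent vertex (via the known presentation $\GG_2(q_4) \simeq \ol{\Rep(U_{q_4}(\gg_2))}$), and the red directed strand to a morphism projecting onto the $\Z_4$-like simple object in $\ol{\Rep(U_{q_4}(\gg_2))}_{A_4}$, whose existence is guaranteed by the classification in \cite{DMNO,gannon2025_exotic_q_subgroups_extensions}.

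The first substantial step is to check that all of the defining relations of Definition~\ref{def:D4} hold under $F$. This is precisely where the GPA embedding bought by the construction does its work: one fixes the concrete embedding $\ol{\Rep(U_{q_4}(\gg_2))}_{A_4} \hookrightarrow \GPA(\Gamma)$ for $\Gamma$ the module fusion graph of $V_{\Lambda_1}$, interprets each generator as an element of the GPA, and verifies the relations by a finite (though large) matrix computation. Faithfulness of the GPA embedding \cite{extended_haagerup} then promotes equality in the GPA to equality in the target. This justifies every listed structure constant, including those in the 44-term (decPentagon) relation omitted from the statement.

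Essential surjectivity on $\Ab(\ol{\DD_4})$ is immediate: the free module on $V_{\Lambda_1}$ together with the $\Z_4$-like simple object tensor-generate $\ol{\Rep(U_{q_4}(\gg_2))}_{A_4}$, so every simple module occurs as a direct summand of some word in the black and red strands and hence in the image of $F$ after Karoubi completion. For fully faithfulness I intend to invoke the general evaluability theorem the paper promises for $\Z_n$-like extensions under the tame assumption on the underlying skein theory. Once every closed diagram in $\DD_4$ reduces to a scalar, and each such scalar agrees with the value computed in the GPA, the map $F \colon \Hom_{\DD_4}(\unit,\unit) \to \Hom(\unit,\unit)$ is an isomorphism; pivotality inherited from $\GG_2(q_4)$ together with the (Reverse) relation then upgrades this to faithfulness on every hom-space. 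Fullness follows by a dimension count, comparing the upper bound from evaluability of $\DD_4$ against the known fusion rules of $\ol{\Rep(U_{q_4}(\gg_2))}_{A_4}$.

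The main obstacle is verifying the hypotheses of the general evaluability theorem for this specific set of structure constants, i.e.\ showing that the listed (decTetragon) and (decPentagon) relations, combined with the trivalent $\GG_2$ skein theory and the (Schur~0)/(Schur~1)/(Recouple) relations, really do suffice to reduce every closed decorated diagram to a scalar. This is essentially a confluence/Diamond-lemma consistency check among competing local reductions; the black skein of $\GG_2(q_4)$ is under control, but the bookkeeping required to track all rotations and decorations of the red strand through a reduction sequence is the technically delicate point.
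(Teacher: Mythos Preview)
Your approach is genuinely different from the paper's, and it contains a real gap.

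First, a small but telling slip: at level~4 the grouplike object is $\Z_2$-like, not $\Z_4$-like (the conformal embedding is $\VV(\gg_2,4)\subseteq\VV(\dd_7,1)$, and $\DD_4$ is a $\Z_2$-like extension).

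The substantive issue is in your first step. You want to define $F\colon\DD_4\to\ol{\Rep(U_{q_4}(\gg_2))}_{A_4}$ by specifying images of the generators and then ``verify the relations by a finite matrix computation'' in the GPA. But the paper's computations take place in $\GPA(\Gamma_4)$, not in $\ol{\Rep(U_{q_4}(\gg_2))}_{A_4}$. What is actually established is that a certain element $P_4\in\Hom_{\GPA(\Gamma_4)}(2\to 2)$ satisfies the $\DD_4$ relations. To promote this to a statement about the projection $P_{g_4}$ inside $\ol{\Rep(U_{q_4}(\gg_2))}_{A_4}$, you would need to know that the canonical embedding $\ol{\Rep(U_{q_4}(\gg_2))}_{A_4}\hookrightarrow\GPA(\Gamma_4)$ sends $P_{g_4}$ to $P_4$ (up to the relevant gauge freedom). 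That identification is not free: the $P_4$ was found by solving equations in the GPA, not by pushing a known morphism forward. So your functor $F$ is not yet well-defined, and this gap is essentially the content of the theorem. Your fullness argument is also underspecified: evaluability controls $\End(\unit)$, but matching all hom-space dimensions against the fusion rules of the target requires more than that.

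The paper avoids all of this by going in the opposite direction. It never constructs a functor \emph{into} $\ol{\Rep(U_{q_4}(\gg_2))}_{A_4}$. Instead it takes the tautological embedding $\ol{\GG_2(q_4)}\hookrightarrow\ol{\DD_4}$, passes to Cauchy completions, and applies the recognition principle (Lemma~\ref{lem:exact-functor}) to conclude that $\Ab(\ol{\DD_4})\cong\ol{\Rep(U_{q_4}(\gg_2))}_{B_4}$ for \emph{some} \'etale algebra $B_4$. Gannon's classification then says $B_4$ is either $\unit$ or $A_4$, and a single hom-space dimension ($\dim\Hom_{\ol{\DD_4}}(2\to 1)=2$ versus the value $1$ forced by $B_4\cong\unit$) rules out the trivial case. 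The GPA embedding is used only to certify that $\DD_4$ is nontrivial and unitary, not to compare against the target directly.
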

Theorem~\ref{thm:level-3} is an analogous theorem for level 3, where $\DD_3$ is defined similarly to $\DD_4$,
with structure constants given in the attached Mathematica files.

It is not clear a priori that the defining relations for, say, $\DD_4$ lead to a nontrivial tensor category.
The general undecidability of the word problem for groups offers some evidence that this question is difficult
for a typical presentation for a tensor category.
That is, one should not expect a set of relations to yield any nontriviality.
The fact that we have a nonzero GPA embedding of $\DD_4$ is what tells us that $\DD_4$ itself is nontrivial.

The remainder of the paper is structured as follows.
Section~\ref{sec:prelim} sets up most of the theory needed, referencing that which we do not exposit here.
This includes unoriented planar algebras, unoriented graph planar algebras, 
internal algebra and module objects, and some assorted theoretical devices and results.
Section~\ref{sec:skein} then goes on to investigate some general properties of cyclic extensions.
We expect this class of categories to be of use for researchers intent on conjuring 
examples of exotic tensor categories.
In fact, in a forthcoming paper, the present author and Edie-Michell will diagrammatically
present a number of near-group categories as cyclic extensions of $SO(3)_q$ trivalent categories.
We demonstrate evaluability of this class of categories under a relatively tame assumption on the
underlying trivalent skein theory.
Section~\ref{sec:gpa-emb} discusses the process of arriving at GPA embeddings.
We detail the techniques used to arrive at GPA embeddings of trivalent categories, and then
show how we extend these embeddings to cyclic extensions.
Finally, in Section~\ref{sec:equivalences} we prove that the representations we've found 
are actually presentations for the respective quantum subgroups.

Throughout the remainder of this paper we reference several Mathematica notebooks.
These can be found at the author's GitHub:
\begin{center}
    \href{https://github.com/calebkennedyhill/quantum_subgroups_G2}{https://github.com/calebkennedyhill/quantum\_subgroups\_G2}.
\end{center}
Dependencies and instructions to recreate the verifications can be found in the file \verb|README.md|
and in the notebook \verb|/code/level-3/nb-1_setup-1.nb| for level 3 and the notebook 
\verb|/code/level-4/nb-1_setup-1.nb| for level 4.
All formal verification of all equations were finished at commit \verb|0cc217e|.

\subsection*{Acknowledgements}

The author would like to thank Cain Edie-Michell for the idea to pursue this project,
and for the innumerable helpful conversations along the way.
The University of New Hampshire provided financial support in the form of several 
fellowships and grants during the preparation of this manuscript.

\section{Preliminaries}\label{sec:prelim}
Here we define the tools we'll use. 
This includes planar algebras, graph planar algebras, and internal algebra and module objects.
We give only a few necessary results, and refer the reader to the definitive publications.
For the general theory of tensor categories, see \cite{EGNO}.

\subsection{Algebra and Module Objects}
We will ultimately show that $\DD_3$ and $\DD_4$ are presentations for the categories
$\cat{q_3}_{A_3}$ and $\cat{q_4}_{A_4}$ of modules over algebra objects $A_3$ and $A_4$ coming from the conformal embeddings 
$\VV(\gg_2,3) \subseteq \VV(\ee_6,1)$ and $\VV(\gg_2,4) \subseteq \VV(\dd_7,1)$, respectively. 
In this subsection we recall basic facts about algebra and module objects,
as well as conformal embeddings. 
See \cite{EGNO, ostrik2001modulecategoriesweakhopf} for more complete descriptions. 
The theory which will apply to our context is given in \cite{cain_noah}.
Some basic properties concerning the interaction of algebra and module objects with
monoidal functors will be used in the proof of our main theorems;
this material can be found in \cite{monoidalFunctorsAndAlgebras}.
We restate a few definitions and facts here.
Unless otherwise stated, we will be assuming the underlying tensor categories are braided.

\begin{definition}
    Let $A$ be an algebra object of the braided tensor category $\CC$.
    $A$ is an {\bf etale} algebra if it is commutative and separable.
    We call $A$ {\bf connected} if it is etale and $\dim\Hom_\CC(\unit \to A)=1$.
\end{definition}

For an etale algebra object $A$ of $\CC$, we denote by $\CC_A$ the collection of right $A$-modules internal to $\CC$.
As described in \cite{cain_noah}, commutativity of $A$ induces a tensor product on $\CC_A$.
Separability of $A$ implies semisimplicity of $\CC_A$, 
and connectedness of $A$ implies the unit $\unit_{\CC_A}=A$ is simple in $\CC_A$ \cite{DMNO}.

The etale conditions are precisely those required to perform the skein theory on $A$-modules 
to define the tensor product on $\CC_A$.
etale is the same thing as multiplication having a section, which means we can pop $A$-bigons.
Connected means theres only one $A$-cap and one $A$-cup.
This lets us wiggle enough to perform the proof that $M\otimes_A N$ is well-defined.

Furthermore, when $\CC$ is semisimple, the free functor 
\[
    \FF_A: \CC \xhookrightarrow{X\mapsto A\otimes X} \CC_A
\]
is faithful monoidal and, as we will see later, not always full.
Its right adjoint is the forgetful functor $\FF^\vee:\CC_A\to\CC$ 
which acts on objects $(M,\mu_M)$ by dropping the multiplication map $\mu_M: M\otimes A\to M$
and on morphisms as the identity.

Beginning with the following lemma, which is recreated from \cite{exactSequencesTensorCategories},
we now recall some facts that will help us along the way.
These will consist of a few results, along with the basics of {\it conformal embeddings}.

\begin{lemma}\label{lem:exact-functor}
    Let $\FF:\CC\to\DD$ be a monoidal functor with faithful exact right adjoint $R$. 
    If we define $A\coloneqq R(\unit)$, then there is an equivalence $\KK$ such that the diagram
    \[
    \xymatrix@C=60pt@R=45pt{
    \CC \ar[r]^{\FF} \ar[dr]^{\FF_A} & \DD \ar[d]^{\KK} \\
     & \CC_A
     }
    \]
    commutes up to natural isomorphism.
\end{lemma}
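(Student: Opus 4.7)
The plan is to follow the standard "reconstruction via monadicity" argument. First I would equip $A = R(\unit)$ with the structure of an algebra in $\CC$. Because $\FF$ is (strong) monoidal, its right adjoint $R$ inherits a canonical lax monoidal structure: the natural transformations
\[
\mu_{X,Y}: R(X) \otimes R(Y) \longrightarrow R(X \otimes Y), \qquad \eta: \unit_{\CC} \longrightarrow R(\unit_{\DD})
\]
are the mates, under the adjunction $\FF \dashv R$, of the monoidal constraints of $\FF$. Specializing $X=Y=\unit$ gives multiplication $m: A\otimes A \to A$ and a unit $u:\unit\to A$ satisfying associativity and unitality by naturality.

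Next I would define the candidate equivalence $\KK$. For any $D\in\DD$, the lax structure provides a map $R(D)\otimes A = R(D)\otimes R(\unit) \to R(D\otimes \unit) \cong R(D)$, which makes $R(D)$ a right $A$-module by essentially the same bookkeeping as for $A$ itself. Set $\KK(D) := (R(D), \text{this action})$, with the obvious action on morphisms. This yields a functor $\KK:\DD\to\CC_A$. To verify the triangle commutes, I need a natural isomorphism $\KK\circ \FF \cong \FF_A$, i.e.\ an $A$-module isomorphism $R(\FF(X)) \cong A\otimes X$. This is the projection formula: using that $R$ is a $\CC$-module functor (with $\CC$ acting on $\DD$ via $\FF$), one has natural maps $A\otimes X = R(\unit)\otimes X \to R(\unit \otimes \FF(X)) = R(\FF(X))$, and one checks both that this is an iso of $A$-modules and that it is compatible with the free-module structure on $\FF_A(X) = A\otimes X$.

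Finally I would show $\KK$ is an equivalence. The adjunction $\FF\dashv R$ produces a monad $T = R\FF$ on $\CC$, and the projection formula identifies this monad with $(-)\otimes A$. Under this identification, $T$-algebras are exactly right $A$-modules, i.e.\ there is a canonical equivalence $\CC^T \simeq \CC_A$, and the comparison functor $\DD \to \CC^T$ sends $D\mapsto R(D)$ with its natural $T$-algebra structure, which is precisely $\KK$ under the identification. By Beck's monadicity theorem, the comparison functor is an equivalence provided $R$ is conservative and preserves the relevant coequalizers; both hypotheses are delivered by the assumption that $R$ is faithful exact.

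The technical heart of the argument, and the main obstacle, is the projection formula $R(\FF(X)) \cong X \otimes A$ together with checking that the constructed map is both an $A$-module isomorphism and a monad isomorphism $T\cong(-)\otimes A$. Once this identification is in hand, both the commutativity of the triangle and the reduction of monadicity to the $A$-module category are essentially formal, and the faithful exactness of $R$ completes the proof via Beck.
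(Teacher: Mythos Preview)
Your proposal is correct and follows the standard monadicity argument. Note, however, that the paper does not actually supply a proof of this lemma: it is quoted as a restatement of a result from \cite{exactSequencesTensorCategories} (Brugui\`eres--Natale), so there is no ``paper's own proof'' to compare against beyond that citation. Your outline is precisely the argument one finds in that reference: endow $A=R(\unit)$ with its algebra structure via the lax monoidal structure on $R$, identify the monad $R\FF$ with $(-)\otimes A$ via the projection formula, and invoke Beck monadicity using faithfulness (for conservativity) and exactness (for preservation of the relevant coequalizers) of $R$.

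One small point worth making explicit in your write-up: the projection formula step, which you correctly flag as the crux, is where the ambient hypotheses on $\CC$ enter. The natural map $R(\unit)\otimes X \to R(\FF(X))$ is an isomorphism because $\CC$ is rigid---one checks it on Hom-sets via the adjunction and duality, obtaining
\[
\Hom_\CC(Y, R(\unit)\otimes X) \cong \Hom_\CC(Y\otimes X^*, R(\unit)) \cong \Hom_\DD(\FF(Y)\otimes \FF(X)^*, \unit) \cong \Hom_\DD(\FF(Y), \FF(X)) \cong \Hom_\CC(Y, R\FF(X)).
\]
In the paper's setting (tensor categories in the sense of \cite{EGNO}) rigidity is assumed throughout, so this is automatic, but your sketch would be cleaner if you name it rather than appealing only to ``$R$ is a $\CC$-module functor.''
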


\begin{lemma}\label{lem:simple-unit-unitary}
    Suppose $\CC$ has simple unit, $\DD$ is unitary, and $\FF:\CC\to \DD$ is a $\dagger$-functor.
    Then $\ol{\CC}$ is unitary, and $F$ descends to a $\dagger$-embedding $\ol{\FF}:\ol{\CC}\to \DD$ such that
    \[
    \xymatrix@C=60pt@R=45pt{
    \CC \ar[r]^{\FF} \ar@{->>}[d]^{} & \DD \\
    \ol{\CC} \ar@{^{(}->}[ur]^{\ol{\FF}} & 
     }
    \]
    commutes.
\end{lemma}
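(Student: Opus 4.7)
The plan is to leverage the $\dagger$-functor structure: pushing traces through $\FF$ converts negligibility conditions in $\CC$ into positivity statements in the unitary $\DD$, from which vanishing follows. The central observation is that because $\unit_\CC$ is simple, $\End(\unit_\CC) \cong \C$, and since any $\C$-linear tensor functor preserves the unit, $\FF$ acts as the identity on this copy of $\C$; hence scalar-valued traces in $\CC$ and their images in $\DD$ agree. This lets us transport equalities of traces across $\FF$ freely.

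First I would show that $\FF$ annihilates the negligible ideal $\Neg$. Given $f \in \Neg(X,Y)$, by definition $\tr(f f^\dagger) = 0$ in $\End(\unit_\CC) \cong \C$. Applying $\FF$, which is monoidal and $\dagger$-preserving, and using the scalar identification, one gets $\tr(\FF(f) \FF(f)^\dagger) = 0$ in $\DD$. Unitarity of $\DD$ makes $(a,b) \mapsto \tr(a b^\dagger)$ a positive definite Hermitian form on every $\Hom$-space, forcing $\FF(f) = 0$. Thus $\FF$ factors uniquely through the quotient $\CC \onto \ol{\CC}$, producing $\ol{\FF}: \ol{\CC} \to \DD$ and the desired commutative triangle.

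For faithfulness of $\ol{\FF}$, suppose $\FF(f) = 0$. Then $\tr(\FF(f)\FF(g)) = 0$ in $\End(\unit_\DD) \cong \C$ for every $g: Y \to X$ in $\CC$. The scalar-identity remark then gives $\tr(fg) = 0$ for all such $g$, so $f \in \Neg$ and $[f] = 0$ in $\ol{\CC}$. Unitarity of $\ol{\CC}$ follows once one checks that $\Neg$ is closed under $\dagger$ (immediate from $\dagger$-compatibility of the trace), so the $\dagger$-structure descends; the induced $\dagger$ on $\ol{\CC}$ is positive because
\[
    \tr([f]^\dagger [f]) \;=\; \tr(\FF(f)^\dagger \FF(f)) \;\geq\; 0,
\]
with equality iff $\FF(f) = 0$, iff $[f] = 0$ by the faithfulness just established.

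The main subtle point is the simple-unit hypothesis. If $\End(\unit_\CC)$ were larger than $\C$, the map $\FF|_{\End(\unit_\CC)}$ could fail to identify the two trace computations, breaking both the descent of $\FF$ and the faithfulness of $\ol{\FF}$. With $\unit_\CC$ simple, $\C$-linearity of $\FF$ together with $\FF(\id_{\unit_\CC}) = \id_{\unit_\DD}$ forces $\FF|_{\End(\unit_\CC)} = \id_\C$, and every step of the argument lines up.
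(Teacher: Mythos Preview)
Your argument is correct and is the standard one. The paper, however, does not actually supply a proof of this lemma in the main text; it is stated as a background fact in the Preliminaries and then used repeatedly. (There is a commented-out fragment in the source, under a related proposition, which sketches exactly your step~1: compute $\tr(\FF(f)^\dagger \FF(f)) = \FF(\tr(f^\dagger f)) = \FF(0) = 0$ and invoke unitarity of $\DD$.) So there is nothing substantive to compare against; your write-up is more complete than anything the paper provides, since you also spell out faithfulness of $\ol{\FF}$ and the positivity argument for the induced $\dagger$ on $\ol{\CC}$, neither of which the paper addresses explicitly.
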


One result which will help immensely in arriving at GPA embeddings is the following,
which is Lemma 2.4 of \cite{cain_noah}.
Since the free functor $\FF_A$ gives an embedding $\CC \hookrightarrow \CC_A$, 
the braid $c_{X,Y}$ in $\CC$ is mapped to $\FF_A(c_{X,Y})$ which defines a braided structure
for the subcategory $\FF_A(\CC)$ of $\CC_A$.
Since the free functor is in general not full, we cannot extend this to a braiding on all of $\CC_A$,
however, there is a half-braid for arbitrary morphisms between objects in the image of $\FF_A$.

\begin{lemma}[Half-braid]\label{lem:general-half-braid}
    Let $\CC$ be a braided tensor category, and $A$ an etale algebra object.
    For any $f\in\Hom_{\CC_A}( \FF_A(Y_1) \to \FF_A(Y_2) )$, the following relation holds:
    \begin{equation*}\tag{Half-braid}
        \skein{/skein_figs/arb_hb_top}{0.2} = \skein{/skein_figs/arb_hb_bottom}{0.2}
    \end{equation*}
\end{lemma}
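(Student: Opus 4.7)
The plan is to reduce the (Half-braid) relation to naturality of the $\CC$-braiding together with commutativity of the algebra $A$. The starting observation is that the free-forgetful adjunction $\FF_A \dashv \FF^{\vee}$ gives a canonical bijection
\[
\Hom_{\CC_A}(\FF_A(Y_1),\FF_A(Y_2)) \;\cong\; \Hom_{\CC}(Y_1,\,A\otimes Y_2),
\]
so any $f$ in the hypothesis is uniquely of the form
\[
f \;=\; (m_A \otimes \id_{Y_2}) \circ (\id_A \otimes \hat f)
\]
for some $\hat f:Y_1 \to A\otimes Y_2$ in $\CC$, where $m_A$ is the multiplication on $A$. Diagrammatically, $f$ is the $\CC$-morphism $\hat f$ followed by a single $m_A$-vertex absorbing the outgoing $A$-strand into the incoming $A$-strand of $\FF_A(Y_1)$.

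First I would rewrite both sides of (Half-braid) after substituting this decomposition for $f$. A third strand threading past $f$ then becomes that strand threading past a $\hat f$-box and an $m_A$-vertex. Because $\hat f$ lives in $\CC$, naturality of the braiding in $\CC$ lets us pull the third strand from one side of $\hat f$ to the other; doing this in both the over- and under-crossing configurations shows that the two sides of (Half-braid) differ only in how the third strand crosses the $A$-strand that feeds into $m_A$.

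The second step is to invoke commutativity of $A$: the identity $m_A \circ c_{A,A} = m_A$ combined with naturality of the braiding applied to the morphism $m_A$ itself implies that any $A$-strand which is eventually multiplied into another $A$-strand may be rerouted above or below any other strand without altering the underlying morphism. Applying this to the residual crossings produced in the first step collapses the over- and under-threaded diagrams to the same composite, establishing the relation.

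The main obstacle is not conceptual but diagrammatic. One must fix sign and orientation conventions consistent with the figures $\skein{/skein_figs/arb_hb_top}{0.2}$ and $\skein{/skein_figs/arb_hb_bottom}{0.2}$, and verify that every residual braiding left over after the naturality move is of the precise sort killed by commutativity of $A$, rather than a half-twist whose survival would obstruct the identity. Beyond this careful bookkeeping, no input is required outside of $\CC$-naturality of the braiding and commutativity of $A$, both of which are granted by the hypothesis that $\CC$ is braided and $A$ is etale.
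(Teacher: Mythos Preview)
The paper does not give its own proof of this lemma; it simply records it as Lemma~2.4 of \cite{cain_noah}. Your outline is the standard argument and is correct: use the free--forgetful adjunction to write $f = (m_A\otimes\id_{Y_2})\circ(\id_A\otimes\hat f)$ for a $\CC$-morphism $\hat f\colon Y_1\to A\otimes Y_2$, slide the extra strand past $\hat f$ by naturality of the braiding in $\CC$, and then absorb the residual crossing with the $A$-strand using commutativity $m_A\circ c_{A,A}=m_A$ together with naturality of $c$ at $m_A$. Your caveat about diagrammatic bookkeeping is well placed, but there is no missing idea.
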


Note that $f$ need not be in the image of the free functor.
We will utilize this result to obtain a rather large number (2970 at level 3 and 7776 at level 4)
of linear equations constraining the GPA coordinates
of the morphisms not living in the image of $\FF_A$.
Thus the half-braid relation will be key to our program, despite not being necessary to prove evaluability.

The source of our algebra objects will be conformal embeddings.
We direct the reader to \cite{DMNO} a more complete treatment.
For a vertex operator algebra $\VV(\gg,j)$, define $\CC(\gg,j) \coloneq \Rep(\VV(\gg,j))$.

Affine Lie algebras and conformal embeddings will only be used to obtain algebra objects 
and module fusion graphs, so we briefly recall the correspondence 
\begin{equation}\label{eq:G2-affine-quantum}
    \CC(\gg_2,k) \cong \ol{\Rep(U_{q_k}(\gg_2))} 
\end{equation}
of \cite{finkelberg1996}, where $k$ is the level and $q_k$ is given by
\[
    q_k = e^{\frac{2\pi i}{3(4+k)}}.
\]

From \cite[Appendix]{DMNO} we recall the conformal embeddings which are of use to us:
\begin{equation*}
    \VV(\gg_2,3) \subseteq \VV(\ee_6,1) \quad\text{and}\quad \VV(\gg_2,4) \subseteq \VV(\dd_7,1).
\end{equation*}
At level 3 we have $q_3 = e^{\frac{2\pi i}{42}}$ and at level 4 we have $q_4 = e^{\frac{2\pi i}{48}}$.
We obtain the algebra objects and fundamental graphs for GPAs from \cite{g2_graphs}:
\begin{equation}\label{eq:alg-objetcs}
    A_3 = V_{\emptyset} \oplus V_{\Lambda_1} \quad\text{and}\quad A_4 = V_{\emptyset} \oplus V_{3\Lambda_1}
\end{equation}
at levels 3 and 4, respectively.

Additionally at level 3 and 4, respectively, we have the existence of 
$\Z_3$-like and $\Z_2$-like simple objects $g_3$ and $g_4$ \cite{DMNO}.
From \cite{g2_graphs}
we see that at both levels $k=3,4$ we have
\[
    \dim\Hom_{ \ol{\Rep(U_{q_k}(\gg_2))}_{A_k} }(\FF_{A_k}(V_{\Lambda_1})^{\otimes2} \to g_k) = 1.
\]

\begin{remark}\label{rem:Pg-properties}
    It follows that there are idempotents 
    \[
        P_{g_k} : \FF_{A_k}(V_{\Lambda_1})^{\otimes2} \to \FF_{A_k}(V_{\Lambda_1})^{\otimes2}
    \]       
    projecting onto these grouplike objects.
    As the $g_i$ are simple, we have $P_{g_k}^\dagger = P_{g_k}$.

    The behavior of the $P_{g_k}$ will be captured by $\skein{/skein_figs/dec_tet_5}{0.1}$ in $\DD_k$.
    As we will see, describing the interaction of $P_{g_k}$ with the image of the free functor 
    will be sufficient to fully describe $\ol{\Rep(U_{q_k}(\gg_2))}_{A_k}$.
\end{remark}

\subsection{Unoriented Planar Algebras}
Recall the theory of {\bf rigid} monoidal categories detailed in \cite{KW}. 
To put it succinctly, rigid monoidal categories have duals. 
Duals, and the associated evaluation and coevaluation maps, give us cups and caps. 
We also assume pivotality throughout, which gives us the ability to isotope diagrams. 
The generators we will use for our planar algebras will be symmetrically self-dual.

Let $X$ be a (symmetrically self-dual) {\bf tensor generator} for the tensor category $\CC$; 
that is, every object of $\CC$ is isomorphic to a subobject of some tensor power $X^{\otimes n}$. 
Let $\PP_{X;\CC}$ be the full subcategory of $\CC$ whose objects are tensor powers $\unit=X^{\otimes0},X,X^{\otimes2},\dots$; 
we call this the (unoriented) {\bf planar algebra} generated by $X$ in $\CC$. 
The planar algebra $\PP_{X;\CC}$ is {\bf evaluable} if $\dim\End_{\PP_{X;\CC}}(\unit)=1$. 

We will be presenting the our two quantum subgroups as extensions of $\GG_2(q)$ skein theories, 
in the spirit of Kuperberg \cite{Kuperberg,tricats}. 
Up to a rescaling by a factor of $\kappa = \sqrt{[7]-1}$ we use the same skein theory as \cite{tricats}
(note the sign error in the \ref{eq:Pentagon} relation of \cite{Kuperberg}). 
\begin{definition}
    For $q$ a root of unity, the $\GG_2(q)$ skein theory is defined to be that generated by an 
    unoriented trivalent vertex $\skein{/skein_figs/trivalent}{0.05}$ satisfying the relations
    \begin{equation*}\tag{Loop}
        \skein{/skein_figs/loop}{0.1} = \delta = q^{10} + q^{8} + q^{2} + 1 + q^{-2} + q^{-8} + q^{-10}
    \end{equation*}

    \begin{equation*}\tag{Lollipop}\label{eq:Lolli}
        \skein{/skein_figs/lollipop}{0.1} = 0  
    \end{equation*}

    \begin{equation*}\tag{Rotate}\label{eq:Rotate}
        \Right^1\left( \skein{/skein_figs/trivalent}{0.1} \right) = \skein{/skein_figs/trivalent}{0.1}
    \end{equation*}

    \begin{equation*}\tag{Bigon}\label{eq:Bigon}
        \skein{/skein_figs/bigon}{0.1} = \kappa^2 \skein{/skein_figs/stick}{0.1}
    \end{equation*}

    \begin{equation*}\tag{Trigon}\label{eq:Trigon}
        \skein{/skein_figs/trigon_LHS}{0.1} = -(q^4 +1+ q^{-4}) \skein{/skein_figs/trivalent}{0.1}
    \end{equation*}

    \begin{equation*}\tag{Tetragon}\label{eq:Tetragon}
        \skein{/skein_figs/tet_LHS}{0.1} 
        = (q^2 + q^{-2}) \left( \skein{/skein_figs/dec_tet_1}{0.1} 
        + \skein{/skein_figs/dec_tet_2}{0.1} \right) 
        + (q^2 +1+ q^{-2}) \left( \skein{/skein_figs/dec_tet_3}{0.1} 
        + \skein{/skein_figs/dec_tet_4}{0.1} \right)
    \end{equation*}

    \begin{equation*}\tag{Pentagon}\label{eq:Pentagon}
        \skein{/skein_figs/pentagon}{0.1} = 
        - \sum_{i=0}^4 \Right^i \left( \skein{/skein_figs/dec_pent_RHS1}{0.1} \right) 
        - \sum_{i=0}^4 \Right^i \left( \skein{/skein_figs/dec_pent_RHS2}{0.1} \right)
    \end{equation*}
\end{definition}

Our use of planar algebras will depend entirely on the construction of the Cauchy completion,
which we sketch here.
See \cite{cain_noah} for more details and \cite{tuba_wenzl} for a full treatment of the topic.
Recall that the {\bf idempotent completion} of a pivotal tensor category $\CC$ consists of pairs $(Z,p)$,
where $p\in\End_\CC(Z)$ is an idempotent.
We denote the idempotent completion of $\CC$ as $\Idemp(\CC)$.
Further, we define the {\bf additive envelope} of a pivotal, $\C$-linear tensor category $\CC$
to have objects formal direct sums $\bigoplus_j Z_j$ for objects $Z_j$ of $\CC$.
The {\bf Cauchy completion} of $\CC$ is defined by 
\[
    \Ab(\CC) \coloneq \Add(\Idemp(\CC)).
\]

If we again assume $X$ tensor generates $\CC$, it follows that $\CC \cong \Ab(\PP_{X;\CC})$ \cite[Theorem 3.4]{tuba_wenzl}.
The universal property of $\Ab(\PP_{X;\CC})$ therefore implies 
that studying $\PP_{X;\CC}$ is sufficient to understand $\CC$. 

By \cite[Corollary 2.20]{bodish_triple_clasp_g2} and \cite[Corollary 6.7]{reconstructing_g2}, the category $\GG_2(q)$ is a {\bf presentation} for the category $\cat{q}$ in the sense that
\[
\ol{\Rep(U_q(\gg_2))} \cong \Ab(\ol{\GG_2(q)}).
\]

\subsection{Unoriented Graph Planar Algebras}
We will study the quantum subgroups of type $G_2$ by embedding their skein theories into appropriate graph planar algebras (GPAs). 
This serves two purposes:
\begin{itemize}
    \item Giving us solid ground on which to do computations, allowing us to uncover relations by finding them in the GPA hom-spaces, and
    \item Implying unitarity for the skein theories,
\end{itemize}
GPAs are an invention of Vaughan Jones \cite{jones_GPA}.
In this work we have no use for more general GPAs so we consider only the unoriented case. 

\begin{definition}\label{def:GPA}
    Let $\Gamma=(V,E)$ be a finite graph. 
    For an edge $e=(u,v)\in E$, let $\ol{e} \coloneq (v,u)\in E$. 
    The {\bf graph planar algebra} on $\Gamma$, denoted $\GPA(\Gamma)$, is the strictly pivotal rigid monoidal category 
    whose objects are nonnegative integers, and whose hom-spaces have basis
    \[
        \Hom_{\GPA(\Gamma)}(m\to n) \coloneqq \C\left\{ (p,q) \mid \substack{\text{$p$ an $m$-path} \\ \text{$q$ and $n$-path}}, \substack{\text{$s(p)=s(q)$}\\ \text{$t(p)=t(q)$}} \right\},
    \]
    with composition law\footnote{With the convention $f\circ g \coloneq f(g)$.}
    \[
        (p,q)\circ(p',q')\coloneqq \delta_{q=p'} (p,q'),
    \]
    and rigidity maps
    \[
        ev = \sum_e \sqrt{ \frac{\lambda_{t(e)}}{\lambda_{s(e)}} } \langle e\ol{e},s(e) \rangle, 
        \quad coev = \sum_e \sqrt{ \frac{\lambda_{t(e)}}{\lambda_{s(e)}} } \langle s(e) e\ol{e} \rangle
    \]
    where $\lambda$ is the Frobenius-Perron eigenvector of the adjacency matrix of $\Gamma$.
    The monoidal product on objects is addition, and for morphisms is defined by 
    \[
        (p,q)\otimes(p',q')\coloneqq \delta_{s(p')=t(p)} (pp',qq').
    \]
\end{definition}

\section{Cyclic extensions}\label{sec:skein}

The goal of this section is to develop the tools needed to prove evaluability of general $\Z_n$-like extensions of trivalent categories.
We expect this class of extensions to be helpful in the search for novel categories.
For example, there is work underway by the present author and Edie-Michell to use the techniques of this paper to 
construct a class of examples of {\it near-group} categories, as defined in \cite{gannon_near-groups}.
This work on near-group categories extends an underlying $SO(3)_q$ trivalent skein theory.
The present author has also computed extensions for two categories of type $SP(4)_q$, which, despite its skein theory being
generated by a braid, is of the same essence. 

This all begs the question of which leaves on the ``tree of life'' of \cite{tricats} 
might bear more fruit of this variety.
Already we have extended both categories ($SO(3)_q$ and $Fib$) covered by \cite[Theorem A]{tricats} by group-like objects.
This paper deals with all but one of the categories covered by \cite[Theorem B]{tricats}.
The categories one might next attempt such an extension of include:
\begin{itemize}
    \item The remaining category $ABA$ of \cite[Theorem B]{tricats}
    \item The category $H_3$ of \cite[Theorem C]{tricats}
\end{itemize}

General methods for demonstrating evaluability of a skein theory involve identifying 
some measure of complexity for a closed diagram, then showing the known 
relations allow one to strictly decrease this measure. 
For our underlying trivalent categories, {\it Euler-evaluability} allows us to 
decrement one measure of complexity: number of internal faces. 
With the new strand type, we have another measure: number of red strands. 
The underlying trivalent categories we deal with have evaluation algorithms based on the standard Euler characteristic argument.
One way to capture this evaluability is by considering dimensions of box spaces.
\begin{definition}
    In a trivalent category we define a {\bf box space} $B(k,f)$ to be the span of diagrams $k\to0$ with $f$ internal faces.
    If $\CC$ is a trivalent category such that, for $k=1,\dots,5$, the containment
    \[
    B(k,1) \subseteq B(k,0)
    \]
    holds, we will refer to $\CC$ as {\bf Euler-evaluable}.
\end{definition}

Diagrams inside a $\Z_n$-like extension exhibit the following nice properties, which will be key in proving their evaluability.
Essentially, we use the following lemmas to exchange decorated faces for singly-externally-decorated faces.
The defining relations for a $\Z_n$-like extension then pop the singly-decorated faces.

\begin{lemma}
    (1) ($\Z_n$) follows from (Recouple) and (Reverse).

    (2) (Split) follows from (Recouple) and ($\Z_n$).

    \begin{equation*}\tag{$\Z_n$}
        \underbrace{ \skein{/skein_figs/n_g_strands}{0.08} }_{\text{$n$}} 
        = \skein{/skein_figs/dec_tet_3}{0.2}
    \end{equation*}

    \begin{equation*}\tag{Split}
        \underbrace{ \skein{/skein_figs/split_LHS}{0.125} }_{\text{$n$}} 
        = \frac{1}{\delta^2} \underbrace{ \skein{/skein_figs/split_RHS}{0.085} }_{\text{$n$}}
    \end{equation*}

\end{lemma}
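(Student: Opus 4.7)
The plan is to derive each identity by a short diagrammatic calculation using only the two cited relations, together with the underlying trivalent skein theory (which is always available inside a $\Z_n$-like extension).

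For part (1), I would begin with the left-hand side of ($\Z_n$), namely a bundle of $n$ parallel oriented red strands. Applying (Reverse) to any $n-1$ of these strands replaces that sub-bundle by a single strand of the opposite orientation, leaving a diagram with one up-strand running alongside one down-strand, i.e.\ an antiparallel pair. A single application of (Recouple) then merges this antiparallel pair into the configuration on the right-hand side of ($\Z_n$). Since both (Reverse) and (Recouple) are stated without scalar prefactors, the identity holds on the nose, with no constant to track.

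For part (2), I would start from the left-hand side of (Split) and apply (Recouple) first, rearranging the $n$ parallel red strands into a form to which ($\Z_n$) can be applied inside the diagram. The combined effect of these two moves collapses the bundle and isolates a pair of closed black regions adjacent to the remaining strands. Each such region evaluates via the underlying (Loop) relation of the trivalent skein theory to a factor of $\delta$, so the two together contribute a scalar of $\delta^2$; moving that scalar to the other side of the equation produces exactly the $1/\delta^2$ prefactor appearing on the right-hand side of (Split).

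The main obstacle lies entirely in part (2): part (1) is essentially immediate once one commits to applying (Reverse) to the correct sub-bundle. For (Split), however, one must verify that the chosen order of applications of (Recouple) and ($\Z_n$) produces exactly two closed black loops — not three, not one surrounded by residual trivalent structure — so that the scalar evaluates cleanly to $\delta^2$. Careful bookkeeping of how (Recouple) interacts with the bundle of $n$ strands, and of what residual black structure remains after ($\Z_n$) collapses the bundle, will be the crux of the argument.
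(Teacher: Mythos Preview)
The paper states this lemma without proof, so there is no argument on record to compare your proposal against. Your sketch is the natural one and is correct in outline.

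For part (1) your reasoning is exactly what is intended: (Reverse) collapses $n-1$ of the $n$ like-oriented red strands to a single strand of the opposite orientation, and one application of (Recouple) then removes the resulting antiparallel pair, leaving the purely black right-hand side. A minor caution: (Reverse) as stated fixes which orientation the $(n-1)$-bundle carries, so depending on the orientation drawn on the left-hand side of ($\Z_n$) you may instead be reading (Reverse) right-to-left on a single strand and then applying (Recouple) $n-1$ times rather than once. Either route uses only the two cited relations, so the derivation goes through regardless.

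For part (2) your strategy --- use (Recouple) to reposition the red strands so that ($\Z_n$) applies, replace the $n$-bundle by black structure, and then evaluate the resulting closed black components via (Loop) --- is the right one, and the count really is two loops, yielding exactly the $\delta^2$. The bookkeeping you flag as the crux is genuine but routine once the intermediate picture is drawn. Since the paper offers no argument at all, your level of detail already exceeds it.
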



\begin{remark}\label{rem:both-orientations}
    The previous lemma implies that, upon reversing the orientations of the lefthand sides of the relations in 
    Definition~\ref{def:zn-ext} will give similar relations.
    This fact will be used in the proof of Lemma~\ref{lem:ext-dec}.
\end{remark}

\begin{remark}
    It is worth noting the following standard abuse of language. 
    A diagrammatically presented category such as a cyclic extension has hom-spaces which are formal spans of diagrams.
    When applying a relation such as (decTrigon) locally, the result is a {\it linear combination} of diagrams.
    Usually, though, this linear combination has some desirable quality, such as a smaller number of internal faces in each summand.
    In this instance, we prefer to say something along the lines of, 
    ``applying (decTrigon) decreases the number of internal faces,''
    instead of, for instance, the more wordy,
    ``applying (decTrigon) turns this diagram into a linear combination of diagrams with fewer internal faces.''
\end{remark}

\begin{lemma}
    In a $\Z_n$-like extension, there exist $n$ scalars $s_i$ such that the following relation holds:
    \begin{equation*}\tag{Slide}
        \skein{/skein_figs/slide_LHS}{0.1} 
        = \sum_{i=0}^{n-1} s_i \skein{/skein_figs/slide_RHS2}{0.1}^i
    \end{equation*}
\end{lemma}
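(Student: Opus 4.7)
The plan is to leverage the bases coming from (Change of Basis) together with the face-reduction toolkit built up in (Recouple), ($\Z_n$), and (Split). Morally, (Slide) says that pushing a red strand across a black strand can be rewritten in terms of configurations indexed by how many red strands end up on the opposite side, with the label $i$ counting the winding/parallel count. Since the $2\to 1$ hom-space is spanned by the diagrams $\skein{/skein_figs/triv\_rightUp\_i}{0.075}$ for $i=0,\dots,n-1$, a priori any planar-algebra morphism with the boundary data of the LHS is a linear combination of the basis elements; existence of the $s_i$ is thus a consequence of this spanning hypothesis, and the content of (Slide) is that we can realize these scalars by a concrete local move.

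First, I would apply (Recouple) at the interface where the red strand crosses the black strand on the LHS. This converts the crossing into a pair of trivalent interactions with an internal ``coupling'' region whose boundary is two black strands and (at most) $n$ parallel red strands, per Remark~\ref{rem:both-orientations} which assures us we can read (Recouple) in either orientation. Next, I would apply (Change of Basis) to the resulting internal trivalent vertex to rewrite it as $\sum_{i=0}^{n-1} r_i \skein{/skein_figs/triv\_rightUp\_i}{0.075}$. At this stage the diagram has been re-expressed as a sum, indexed by $i$, of pictures in which $i$ parallel red strands are glued into a local configuration attached to a right-up trivalent vertex.

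Then I would clean up the internal red structure in each summand using ($\Z_n$) and (Split): any bundle of $n$ parallel red strands collapses via ($\Z_n$) to an uncluttered piece, and (Split) lets me separate parallel red bundles through trivalent vertices so that only the $i$-th power of a single rightmost red-decoration template survives, exactly matching the shape of $\skein{/skein_figs/slide\_RHS2}{0.075}^i$. Absorbing the coefficients arising from (Recouple), (Change of Basis), and the $\delta^{-2}$ produced by (Split) yields the scalars $s_i$.

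The main obstacle I anticipate is bookkeeping during the simplification phase: the application of (Change of Basis) produces $n$ summands, each of which may in turn require several applications of ($\Z_n$) and (Split) before reaching the canonical right-up-with-$i$-red-strands form, and one must verify that the residual internal red strands really do collapse to give a clean power $i$ on the right. As long as the spanning hypothesis built into Definition~\ref{def:zn-ext} is in force, however, no identification problem can survive the reduction; any discrepancy would have to lie in $\ker$ of the spanning map, which is trivial by assumption, so the existence of the $s_i$ is automatic once the reduction algorithm terminates.
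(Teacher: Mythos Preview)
The concrete portion of your argument has a genuine gap: you cannot apply (Recouple) as your first move. That relation exchanges a \emph{pair} of red strands running between two parallel black strands, but the LHS of (Slide) contains only a single red decoration attached to one leg of a trivalent vertex, with the lower leg left bare. There is no ``interface where the red strand crosses the black strand'' for (Recouple) to act on. The paper's key first step---which your proposal omits entirely---is to apply (decStick) in reverse on that undecorated lower leg, conjuring a second (down-oriented) red strand at no cost. Only once this second strand is in place does (Recouple) become applicable; a single use of (Change of Basis) then finishes the argument. Your subsequent appeals to ($\Z_n$) and (Split) are unnecessary: the paper's proof is three relations long.

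Your fallback spanning argument is closer in spirit to a valid proof but is too loose as written. You correctly observe that the spanning hypothesis forces any $2\to1$ morphism into the span of the basis diagrams, but you never verify that the pictures appearing on the RHS of (Slide) coincide with that spanning family, nor that the LHS actually carries the boundary data of a $2\to1$ morphism. Your closing remark that ``any discrepancy would have to lie in $\ker$ of the spanning map, which is trivial by assumption'' also conflates spanning with linear independence; Definition~\ref{def:zn-ext} asserts only the former.
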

\begin{proof}
    Apply (decStick) to the undecorated lower leg of the trivalent vertex to conjure a down-oriented red strand.
    Then use (Recouple), and (Change of Basis).
\end{proof}

We use this fact in the proof of the following lemma.

\begin{lemma}\label{lem:ext-dec}
    In a cyclic extension, any decoration of a diagram with one internal face may be expressed as a $\C$-linear combination 
    of singly-externally decorated diagrams
\end{lemma}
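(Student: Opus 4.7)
The plan is to induct on the number $r$ of points where red strands of the decorated diagram meet the boundary of the internal face $F$. Let $\tilde{\EE}$ be a decoration of a diagram whose underlying trivalent skeleton has the single internal face $F$, bounded by $k$ edges. Before beginning the induction I would first normalize $\tilde{\EE}$: using (Slide), every red strand crossing a trivalent vertex can be pushed onto an adjacent edge, so that red strands decorate only edges of the skeleton; using ($\Z_n$) and (Split) from the preceding lemma, any $n$ parallel red strands can be collapsed and so the multiplicity of parallel red strands on any single edge can be bounded by $n-1$; and (decStick), together with (Reverse) and Remark~\ref{rem:both-orientations}, removes any red strand that enters and exits via the same edge of $F$.

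For the base case $r=0$, no red strand of $\tilde{\EE}$ meets $\partial F$. Since $F$ is the \emph{only} internal face of the skeleton, all red strands of $\tilde{\EE}$ then lie on external edges of the skeleton. Applying (Change of Basis) to consolidate parallel red decorations where needed, $\tilde{\EE}$ is already a linear combination of singly-externally decorated diagrams, as required.

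For the inductive step $r > 0$, I would select a single red arc crossing $\partial F$ and apply whichever of (decBigon), (decTrigon), (decTetragon), or (decPentagon) matches the size of $F$; each such relation rewrites the local configuration on $F$ as a sum of diagrams $\Dec_i(\Right^j(\EE'))$ in which the red decoration has been relocated outside $F$, strictly decreasing $r$. Iterating, the inductive hypothesis applies to each summand. When $k \ge 6$, the defining relations no longer apply directly, so I would first invoke the underlying trivalent (Tetragon) and (Pentagon) relations to subdivide $F$ into a collection of faces of size $\le 5$, using (Slide) and (Recouple) to carry the existing red decoration coherently through the subdivision; this reduces to the case $k \le 5$ just handled.

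The main obstacle is precisely this $k \ge 6$ step: one must verify that the undecorated trivalent reductions of $F$ can be performed compatibly with the red decoration already present on or inside $F$, without producing configurations outside the scope of the (decPolygon) relations. This is where the systematic interplay of (Slide) (to move red strands across newly introduced trivalent vertices), (Recouple) (to regroup red strands around new edges), and the normalizations from the first paragraph becomes essential. Once it is verified that every intermediate diagram can be brought back into the normalized form, the induction on $r$ terminates and the conclusion follows.
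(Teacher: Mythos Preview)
Your inductive step has a circularity problem. The relations (decBigon), (decTrigon), (decTetragon), (decPentagon) each have on their left-hand side a polygon carrying \emph{exactly one} external red decoration --- that is precisely the ``singly-externally decorated'' form which is the \emph{conclusion} of the lemma. When $r>1$, or when some red strand lies on an internal edge of the face, none of these relations apply as written; you cannot ``select a single red arc crossing $\partial F$'' and invoke (decTrigon) while other red arcs are still present on $F$. Moreover, the right-hand sides of these relations contain no internal face at all (they pop the polygon into trees), so even where they do apply they do not merely decrement $r$ while preserving $F$. The lemma is exactly what is needed \emph{before} the (decPolygon) relations can be used, which is how it is deployed in the proof of Theorem~\ref{thm:eval-criteria}.

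The paper's argument avoids the (decPolygon) relations entirely. Working with a maximally decorated $k$-gon, it uses only (Swap) and (Slide) to push red strands along and past trivalent vertices, together with (Change of Basis) to consolidate red strands arriving at the same vertex, iterating until a single external decoration remains. These three relations act locally at trivalent vertices regardless of how many other red strands are present on the diagram, so there is no circularity. Your normalization paragraph is heading in this direction; the fix is to make (Swap), (Slide), and (Change of Basis) carry the whole argument, and to drop the appeal to the (decPolygon) relations and the $k\ge 6$ subdivision step altogether.
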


\begin{proof}
    We prove the lemma for a decorated trigon, and leave the remaining cases to the reader. 
    We begin with a maximally-decorated trigon.
    All less decorated cases are absorbed along the way in this analysis.
    Additionally, following Remark~\ref{rem:both-orientations} we know that analogues of the defining relations 
    for a cyclic extension hold for both strand orientations of the diagrams on the lefthand side.
    Hence we begin with a diagram whose red strands are unoriented;
    this is possible by applying whichever version of the relations we need at the time.
    We may omit the labels $1,\dots,n-1$ for the red strands by the same reasoning:
    analogous relations hold for multiple red strands.

    Now, a maximally-decorated trigon is of the form:
    \[
        \skein{/skein_figs/dec_3gon1}{0.15}
    \]
    with any orientation on the red strands. We apply the relations (Swap) and (Slide) on the internal red strands to obtain a combination of diagrams of the form
    \[
        \skein{/skein_figs/dec_3gon2}{0.15}
    \]
    Now apply (Change of Basis) to reduce to a combination of diagrams of the form
    \[
        \skein{/skein_figs/dec_3gon3}{0.15}
    \]
    By another application of (Slide) and (Change of Basis) we arrive at a diagram of the form 
    \[
        \skein{/skein_figs/dec_3gon4}{0.15}
    \]
    During this last step, we pick up red strands between the black ``spokes''; one may happily move these out of the diagram.
\end{proof}

One more lemma will complete our ability to evaluate closed diagrams in $\Z_n$-like extensions.
\begin{lemma}\label{lem:decorated-graph}
    Suppose a diagram $\EE$ in a $\Z_n$-like extension consists only of black loops and 
    red oriented edges between them. 
    Suppose furthermore that each loop of $\EE$ has either exactly $n$ red strands entering or exactly $n$ red strands leaving.
    Then the diagram $\EE$ evaluates to a scalar.
\end{lemma}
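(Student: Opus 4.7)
The plan is to induct on the total number of red strands in $\EE$. In the base case where $\EE$ contains no red strands at all, the diagram is a disjoint union of closed black loops; repeated application of the (Loop) relation evaluates $\EE$ to $\delta^k$, where $k$ is the number of loops, which is a scalar.

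For the inductive step, fix any loop $L$ in $\EE$. By hypothesis, $L$ carries exactly $n$ red strands, all of uniform orientation. The first move is to apply (Recouple) repeatedly to slide the $n$ attachment points along $L$ until they occupy consecutive positions, producing locally a bundle of $n$ parallel red strands emerging from a single point of $L$. At this stage one invokes the ($\Z_n$) relation on the bundle, together with (Split) as needed, to peel the bundle off of $L$ entirely. The loop $L$ is then freed from its red attachments and evaluates to $\delta$ via (Loop), while the detached bundle reattaches among the remaining loops.

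The main obstacle is preserving the inductive hypothesis after the reduction: the other endpoints of the $n$ strands peeled off of $L$ originally sat on other loops of $\EE$, and those loops may now fail to carry exactly $n$ strands of a single orientation. To circumvent this, one should first use (Recouple), (Slide), and the planarity of $\EE$ to route all $n$ strands leaving $L$ toward a single neighboring loop $L'$, so that the ($\Z_n$) reduction detaches strands in tandem from both $L$ and $L'$. After such a paired detachment, both $L$ and $L'$ become bare black loops contributing a factor of $\delta^2$, the residual diagram satisfies the hypothesis of the lemma with strictly fewer red strands, and the induction closes. The routing step is the delicate part: one must verify that (Recouple) and (Slide), combined with the bipartite source/sink structure forced by the hypothesis, give enough flexibility to aggregate all $n$ strands leaving $L$ at a common target in every planar configuration.
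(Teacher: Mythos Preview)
Your overall strategy is the paper's: aggregate all $n$ red strands from a chosen loop $L$ onto a single neighbor $L'$, remove the pair via $(\Z_n)$ for a factor of $\delta^2$, and iterate. Two points need correction or sharpening.

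First, (Slide) has no role here. That relation involves a black trivalent vertex, and by hypothesis $\EE$ contains none; the only rerouting move available is (Recouple). Moreover, (Recouple) does not ``slide attachment points along $L$'' as you phrase it: it takes a pair of red edges $u\to v$ and $w'\to w$ in suitable planar position and exchanges them for the pair $u\to w$ and $w'\to v$.

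Second, the paper supplies exactly the routing mechanism you flag as delicate and leave unverified. Fix a source loop $u$ and a sink neighbor $w$ with $\deg(u\to w)=k<n$. Reading counterclockwise around $u$ starting at the $w$-bundle, the remaining $n-k$ outgoing edges go to sinks $v_1,\dots,v_{n-k}$; reading clockwise around $w$ starting at the $u$-bundle, the remaining $n-k$ incoming edges come from sources $w_1,\dots,w_{n-k}$. Planarity forces the edges $u\to v_1$ and $w_1\to w$ to be the innermost pair adjacent to the $u$--$w$ bundle, so (Recouple) applies and yields $u\to w$ together with $w_1\to v_1$. Repeating for $i=2,\dots,n-k$ raises $\deg(u\to w)$ to $n$. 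Crucially, each other loop involved has one incident edge replaced by another of the \emph{same} orientation, so the hypothesis on the residual diagram is preserved after removing $u$ and $w$. This concrete planar pairing is what your sketch is missing.
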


\begin{proof}
    We'll use graph theoretic language, with black loops playing the role of nodes,
    and oriented red edges playing the role of oriented edges.

    If a node has exactly one neighbor, use ($\Z_n$) to remove both for a $\delta^2$.
    So assume every node has at least two neighbors.
    Pick one node and call it $u$.
    Without loss of generality, assume the only oriented edges leaving $u$ are outgoing.
    Call one of its neighbors $w$;
    since $u$ has multiple neighbors, we have $\deg(u\to w)=k<n$.
    Recalling briefly that $u$ is actually a loop, we may traverse it counterclockwise
    beginning at the outgoing edge to $w$.
    Along the way there will be $n-k$ more outgoing red edges;
    each corresponds to a neighbor of $u$.
    With this counterclockwise orientation, call the remaining neighbors of $u$ by $v_1,\dots,v_{n-k}$, noting that
    these need not be distinct.
    We may label the neighbors of $w$ in a similar way, but traversing clockwise
    beginning at the incoming edge from $u$.
    Call these neighbors of $w$ by $w_1,\dots,w_{n-k}$, again noting that 
    they need not be distinct.

    The diagram is planar, so we may isotope it to look, locally, as follows:
    \begin{center}
        \begin{tikzpicture}
            \node[shape=circle,draw=black] (u) at (6,0) {$u$};

            \node[shape=circle,draw=black] (v1) at (5,1.5) {$v_1$};
            \node[shape=circle,draw=black] (v2) at (4,1.5) {$v_2$};
            \node[shape=circle,draw=black] (vnk) at (2,1.5) {$v_{n-k}$};

            \node[shape=circle,draw=black] (w) at (7,1.5) {$w$};
            \node[shape=circle,draw=black] (w1) at (7,3) {$w_1$};
            \node[shape=circle,draw=black] (w2) at (8,4) {$w_2$};
            \node[shape=circle,draw=black] (wnk) at (10,6) {$w_{n-k}$};

            \node[] (vdots) at (3,1.5) {$\dots$};
            \node[rotate=45] (wdots) at (9,5) {$\dots$};

            \path [->, draw=orange] (u) edge node {} (v1);
            \path [->, draw=orange] (u) edge node {} (v2);
            \path [->, draw=orange] (u) edge node {} (vnk);

            \path [->, draw=orange] (u) edge node [right] {$k$} (w);
 
            \path [->, draw=orange] (w1) edge node {} (w);
            \path [->, draw=orange] (w2) edge [bend left] node {} (w);
            \path [->, draw=orange] (wnk) edge [bend left] node {} (w);
        \end{tikzpicture}
    \end{center}
    Note that we may have omitted edges here.
    That is, we may have $\deg(v_i \to w_j) \neq 0$, or there may be other nodes not pictured.
    This is not an issue for us.

    Now apply (Recouple), exchanging pairs of edges $u\to v_i$ and $w_j \to w$ for pairs of edges $u\to w$ and $w_j\to v_i$.
    This changes $\deg(u\to w)$ to $n$, allowing us, using ($\Z_n$), to exchange a pair of nodes for a scalar.
    Continue until only pairs of nodes remain, exchanging each pair for a $\delta^2$.
\end{proof}

\begin{theorem}\label{thm:eval-criteria}
    A $\Z_n$-like extension of an Euler-evaluable trivalent category is evaluable.
\end{theorem}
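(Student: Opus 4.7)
The plan is to describe an explicit reduction algorithm and induct on a suitable complexity measure. Given a closed diagram $D$ in the cyclic extension $\DD$, let $D_b$ denote the underlying trivalent diagram obtained by erasing all red strands. The primary complexity measure will be the number of internal faces of $D_b$; the goal is to reduce this to zero, at which point Lemma~\ref{lem:decorated-graph} produces a scalar.

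For the inductive step, suppose $D_b$ has at least one internal face. The standard Euler characteristic bound for a trivalent planar graph, $\sum_k (6-k) f_k \geq 6$ where $f_k$ counts internal $k$-gons, guarantees an internal face $F$ of size $k \leq 5$. If no red strand lies inside $F$ or crosses its boundary, then Euler-evaluability of the underlying trivalent category rewrites $F$ as a $\C$-linear combination of diagrams with strictly fewer internal faces, and the induction hypothesis applies. If $F$ interacts with red strands, then since $F$ is a single internal face, we invoke Lemma~\ref{lem:ext-dec} to re-express the decorated $k$-gon as a linear combination of singly-externally-decorated $k$-gons. Each such configuration is then reduced by the appropriate defining relation: (decStick) and (decBigon) for $k=1,2$; (decTrigon) together with (Schur 0) and (Schur 1) for $k=3$; and (decTetragon) and (decPentagon) for $k=4,5$. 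In every case the right-hand sides contain strictly fewer internal faces of the underlying trivalent diagram.

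For the base case, when $D_b$ has no internal faces, trivalence forces $D_b$ to be a disjoint union of black loops. The full diagram $D$ then consists of these loops together with red oriented edges between them, plus possibly isolated red components which evaluate to scalars directly. Applying (Reverse), (Split), and (Recouple), one can redistribute the red decorations so that every black loop has either $n$ outgoing or $n$ incoming red strands; Lemma~\ref{lem:decorated-graph} then evaluates what remains.

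The main obstacle will be the case in which red strands are trapped strictly inside the face $F$ rather than merely piercing its boundary. Lemma~\ref{lem:ext-dec} is naturally suited to external decorations, so handling interior red configurations will likely require a sub-induction on the number of red strands enclosed by $F$, using (Recouple) and (Slide) to migrate them across the boundary of $F$ before applying the lemma. Making this rigorous probably means refining the complexity measure to a lexicographic pair (number of black internal faces, total number of red strands), and verifying that each relation invoked above strictly decreases this pair; termination of the algorithm then follows by well-foundedness.
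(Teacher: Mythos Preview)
Your overall architecture is the same as the paper's: locate a small black face via the Euler characteristic, invoke Lemma~\ref{lem:ext-dec} to reduce it to a singly-externally-decorated $m$-gon, pop it with the appropriate (dec$m$-gon) relation, and hand the faceless residue to Lemma~\ref{lem:decorated-graph}. So there is no genuinely different route here.

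There is one real gap, in your base case. You assert that with (Reverse), (Split), and (Recouple) one can always redistribute so that every black loop carries exactly $n$ incoming or exactly $n$ outgoing red strands. That is not true in general: the total red-strand count attached to a given loop need not be a multiple of $n$, and no amount of recoupling fixes a bad residue class. The paper's argument handles this correctly: first use (Recouple) and ($\Z_n$) to make each loop either all-in or all-out, and then observe that if any loop has a count different from $n$, the combination of (Split), ($\Z_n$), and (Schur~0) forces the entire diagram to vanish. Only after this dichotomy is established does Lemma~\ref{lem:decorated-graph} apply. You need to insert this ``otherwise zero'' branch.

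Your closing paragraph about red strands trapped strictly inside $F$ is overcautious. The statement of Lemma~\ref{lem:ext-dec} already covers \emph{any} decoration of a single-face diagram, and its proof explicitly manipulates internal red strands via (Swap), (Slide), and (Change of Basis) before pushing everything out. So the sub-induction and the lexicographic refinement you propose are not needed; citing the lemma as stated suffices.
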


\begin{proof}
    Suppose we begin with a diagram given by a closed, decorated planar trivalent graph.
    Begin by applying relations from the underlying trivalent category's evaluation algorithm to any undecorated faces; 
    this decreases the number of trivalent vertices.
    By the standard Euler characteristic calculation, there must remain some black $m$-gon with $m\in\{ 0,1,\dots,5 \}$.
    The $m=0$ case is handled at the end of this proof.
    The $m=1$ case is handled by (Change of Basis) and (Schur 0). 
    Choose one such face and apply Lemma~\ref{lem:ext-dec} to reduce it to a singly-externally-decorated $m$-gon.
    Now one of the relations (decBigon), (decTrigon), (decTetragon), or (decPentagon) allows us to pop the face.
    This process decreases the number of faces (ignoring red strands) in diagrams by at least 1 at every step,
    but also may increase the number of connected components in any summand.
    Continue this process until only decorated loops, or decorated loops connected by red strands remain.
    If only decorated loops remain, apply (decStick).
    
    Our diagram now consists of a number of black loops, connected by red strands.
    Use (Recouple) and ($\Z_n$) to make it so every black loop has either only in-strands or only out-strands attached to it.
    Now, if any black loop has more or fewer than $n$ strands entering or exiting it,
    then (Split), ($\Z_n$), and (Schur 0) imply the whole diagram is zero.
    So suppose each black loop has exactly $n$ strands entering or exiting.
    Apply Lemma~\ref{lem:decorated-graph} to evaluate the remaining graph for a scalar.
\end{proof}

As with all evaluability arguments, if we have nontriviality, we immediately deduce simplicity of the unit.

\begin{corollary}\label{cor:Zn-simple-unit}
    For a $\Z_n$-like extension $\DD$ of an Euler-evaluable trivalent category, we have
    \[
        \dim\Hom_{\DD} (0\to 0) \leq 1.
    \]
\end{corollary}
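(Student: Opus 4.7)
The plan is to observe that the corollary is an immediate structural consequence of Theorem~\ref{thm:eval-criteria}. By definition, any element of $\Hom_{\DD}(0\to 0)$ is a formal $\C$-linear combination of closed decorated diagrams. So it suffices to show that every such closed diagram is a scalar multiple of the empty diagram, as then the empty diagram spans the hom-space.

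First I would take an arbitrary closed diagram $\EE$ in $\DD$ and run it through the evaluation algorithm constructed in the proof of Theorem~\ref{thm:eval-criteria}. That algorithm proceeds in two stages: it first eliminates all internal $m$-gons (for $0 \le m \le 5$) by combining the underlying trivalent Euler-evaluability with Lemma~\ref{lem:ext-dec} together with the (decBigon), (decTrigon), (decTetragon), (decPentagon), (Schur 0), and (Change of Basis) relations; and it then reduces the remaining configuration of decorated loops joined by red strands to a scalar by repeated applications of (Recouple), ($\Z_n$), (Split), and Lemma~\ref{lem:decorated-graph}, with (decStick) handling any isolated decorated loops. The output is, by construction, a scalar in $\C$ multiplying the empty diagram.

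Applying this to every summand of an arbitrary $f \in \Hom_{\DD}(0\to 0)$, we conclude that $f$ itself is a scalar multiple of the identity of the unit object. Hence $\Hom_{\DD}(0\to 0)$ is spanned by a single element and so has dimension at most one. There is no real obstacle here beyond invoking the theorem; the only subtlety worth flagging is that we cannot claim equality in general, since a priori the relations might force the empty diagram itself to be zero, which is precisely why the statement is phrased as an inequality.
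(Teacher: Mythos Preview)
Your proposal is correct and matches the paper's approach exactly: the paper treats this corollary as an immediate consequence of Theorem~\ref{thm:eval-criteria}, offering only the one-line remark ``As with all evaluability arguments, if we have nontriviality, we immediately deduce simplicity of the unit'' and stating the corollary without further proof. Your write-up is more detailed than the paper's, but the logic is identical.
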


\section{GPA Embeddings}\label{sec:gpa-emb}

This section is devoted to discussing the details of our GPA embeddings.
This will include a discussion of the techniques used to solve the defining equations,
along with a discussion of the coordinates these solutions define.
Subsection~\ref{subsec:extend-level-4} discusses some of the representation theory which led us to 
search for $\Z_n$-like extensions in the first place;
in this respect, we discuss only the details of level 4, but the story at level 3 is essentially the same.

We find our module fusion graphs by orbifolding the graphs in Figures 18b and 21b of \cite{g2_graphs}.
These graphs are shown in Figure~\ref{fig:new-graphs}.

\begin{figure}
    \noindent\makebox[\textwidth]{
    \begin{tikzpicture}[scale=1.0]
        \node[shape=circle,draw=black] (A) at (0,1.5) {1};
        \node[shape=circle,draw=black] (B) at (1.25,3) {2};
        \node[shape=circle,draw=black] (C) at (4.25,3) {3};
        \node[shape=circle,draw=black] (D) at (5.5,1.5) {4};
        \node[shape=circle,draw=black] (E) at (4.25,0) {5};
        \node[shape=circle,draw=black] (F) at (1.25,0) {6};

        \path (B) edge [loop, in=45, out=135, looseness=8] node {} (B);
        \path (C) edge [loop, in=45, out=135, looseness=8] node {} (C);
        \path (E) edge [loop, in=225, out=315, looseness=8] node {} (E);
        \path (F) edge [loop, in=225, out=315, looseness=8] node {} (F);

        \path [-] (A) edge node {} (B);
        \path [-] (A) edge node {} (F);

        \path [-] (B) edge node {} (C);
        \path [-] (B) edge node {} (E);
        \path [-] (B) edge node {} (F);

        \path [-] (C) edge node {} (D);
        \path [-] (C) edge node {} (E);
        \path [-] (C) edge node {} (F);

        \path [-] (D) edge node {} (E);

        \path [-] (E) edge node {} (F);

        \path [->, draw=orange] (A) edge [loop, in=135, out=225, looseness=8] node {} (A);
        \path [->, draw=orange] (D) edge [loop, in=45, out=-45, looseness=8] node {} (D);

        \path [->, draw=orange] (B) edge [bend right] node  {} (F);
        \path [->, draw=orange] (F) edge [bend right] node {} (B);

        \path [->, draw=orange] (C) edge [bend right] node {} (E);
        \path [->, draw=orange] (E) edge [bend right] node {} (C);
    \end{tikzpicture}
    \begin{tikzpicture}[scale=1.0]
        \node[shape=circle,draw=black] (A) at (1.5,1.66) {1};
        \node[shape=circle,draw=black] (B) at (0,3) {2};
        \node[shape=circle,draw=black] (C) at (3,3) {3};
        \node[shape=circle,draw=black] (D) at (1.5,0) {4};
        
        \path (B) edge [loop, in=120, out=180, looseness=10] node {} (B);
        \path (C) edge [loop, in=0, out=60, looseness=10] node {} (C);
        \path (D) edge [loop, in=240, out=300, looseness=10] node {} (D);
        
        \path [-] (D) edge node {} (B);
        \path [-] (B) edge node {} (C);
        \path [-] (C) edge node {} (D);

        \path [-] (A) edge node {} (B);
        \path [-] (A) edge node {} (C);
        \path [-] (A) edge node {} (D);

        \path [->, draw=orange] (B) edge [bend left] node {} (C);
        \path [->, draw=orange] (C) edge [bend left] node {} (D);
        \path [->, draw=orange] (D) edge [bend left] node {} (B);
        \path [->, draw=orange] (A) edge [loop] node {} (A);
    \end{tikzpicture}
    }
    \caption{
        Fusion graphs at level 4 and 3 for $Y_4$ and $Y_3$ (black) and $g_4$ and $g_3$ (orange), respectively. 
        See Figures 21b and 18b, respectively, of \cite{g2_graphs}.
        }
    \label{fig:new-graphs}
\end{figure}
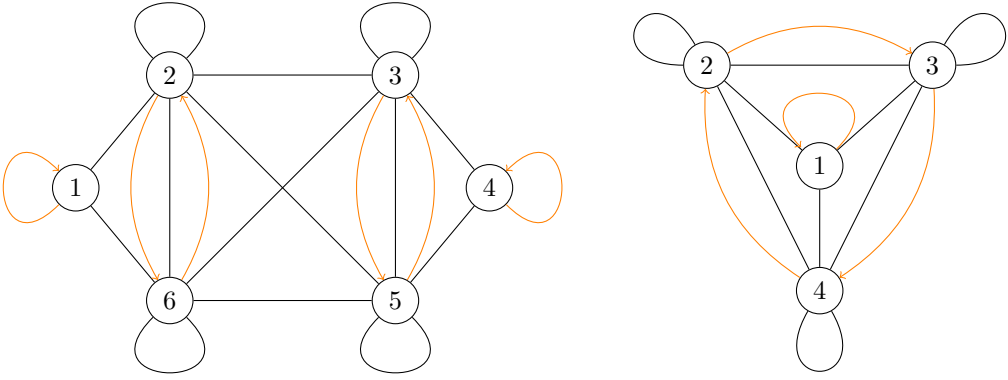

        
        



One may give a monoidal functor $F:\GG_2(q) \to \GPA(\Gamma)$ by specifying the image of the morphism 
\[
F\left( \skein{skein_figs/trivalent}{0.08} \right) \in \Hom_{\GPA(\Gamma)}(2\to1).
\]
This amounts to giving a list of $M\coloneqq\tr(\Gamma^2\cdot\Gamma)$ complex numbers\footnote{
    We freely switch between using $\Gamma$ to mean the graph itself and the graph's adjacency matrix. }, 
say $a_1,\dots,a_M$. 
Pushing the defining relations of $\GG_2(q)$ through $F$, we see that 
these complex numbers satisfy equations in the $a_i$ and $\ol{a_i}$. 
If we assume for now that each $a_i$ is real, then this reduces the system to a collection 
of polynomials in the $a_i$. \footnote{
    This assumption is useful only if it turns out to help us solve the system. 
    In fact, any assumptions we make about this system, if they yield solutions, are valid.
    }
Once we have the image of the trivalent vertex in hand, we have found an embedding of the planar algebra it generates. 
We can then follow a similar approach to solve for the image
\[
F\left( \skein{skein_figs/dec_tet_5}{0.08} \right) \in \Hom_{\GPA(\Gamma)}(2\to2)
\]
to extend the GPA embedding of $\GG_2(q)$.
Let us discuss our examples.

\subsection{Level 4 Trivalent Embedding}
We will begin with level 4.
Let $\Gamma_4$ be the graph on the left side of  Figure~\ref{fig:new-graphs}.
Set $q_4 \coloneq e^{2\pi i/48}$.
The following theorem says that we have an embedding of $\ol{\GG_2(q_4)}$ into the GPA on $\Gamma_4$.
A proof of the theorem is extremely straightforward; one must verify some equations.
Why this verification amounts to a proof requires some explanation, and is discussed in detail 
below the theorem.

\begin{theorem}\label{thm:G2-level4-GPA-emb}
    There is a faithful monoidal functor $\ol{F_4}:\ol{\GG_2(q_4)} \hookrightarrow \GPA(\Gamma_4)$.
\end{theorem}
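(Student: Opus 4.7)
The plan is to construct $F_4$ as a monoidal functor from $\GG_2(q_4)$ into $\GPA(\Gamma_4)$ by specifying the image of the single trivalent generator, verify that the defining relations of $\GG_2(q_4)$ hold in the GPA, and then descend to the semisimple quotient using Lemma \ref{lem:simple-unit-unitary}.

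Since $\GG_2(q_4)$ is the planar algebra on a single trivalent vertex, choosing a monoidal functor $F_4: \GG_2(q_4) \to \GPA(\Gamma_4)$ amounts to choosing an element
\[
F_4\bigl(\, \skein{/skein_figs/trivalent}{0.08} \,\bigr) \in \Hom_{\GPA(\Gamma_4)}(2 \to 1),
\]
which I record as a tuple of $M = \tr(\Gamma_4^2 \cdot \Gamma_4)$ complex coefficients, one for each pair consisting of a length-$2$ and a length-$1$ path in $\Gamma_4$ sharing endpoints. The relations (Loop), (Lollipop), (Rotate), (Bigon), (Trigon), (Tetragon), and (Pentagon) translate into a finite system of polynomial equations in these unknowns. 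Once a candidate solution is in hand, each relation becomes a concrete identity in $\GPA(\Gamma_4)$ that is verified by direct symbolic computation, as recorded in the attached Mathematica files.

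To upgrade $F_4$ to a faithful embedding of $\ol{\GG_2(q_4)}$, I apply Lemma \ref{lem:simple-unit-unitary}, whose three hypotheses must be checked. Simplicity of the unit of $\GG_2(q_4)$ follows from Euler-characteristic evaluability of the $G_2$ skein theory: every closed diagram reduces to a scalar, so $\End(\unit)$ is one-dimensional. Unitarity of $\GPA(\Gamma_4)$ is a standard fact due to Jones \cite{jones_GPA}. Finally, $F_4$ must be chosen as a $\dagger$-functor; since the trivalent generator is rotationally invariant and the defining relations are stable under the dagger, one can restrict the search for $F_4(\skein{/skein_figs/trivalent}{0.08})$ to the self-adjoint part of $\Hom_{\GPA(\Gamma_4)}(2 \to 1)$, producing a solution that intertwines the two dagger structures. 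Lemma \ref{lem:simple-unit-unitary} then delivers the faithful $\ol{F_4}$.

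The main obstacle lies in the first step: actually producing a tuple of coefficients solving the full polynomial system. The system has $M$ unknowns and the Pentagon relation alone contributes a large number of nonlinear constraints, so brute-force Gröbner-basis techniques are infeasible. My approach, following the exposition preceding the theorem, is to exploit the automorphism group of $\Gamma_4$ and the pivotal symmetry of the trivalent vertex to reduce the effective number of unknowns, impose a reality ansatz as a first pass, and seed the search with the form of GPA embeddings known at nearby values of $q$. Once an ansatz solves the low-degree relations (Rotate), (Bigon), and (Trigon), only finitely many refinements remain to satisfy (Tetragon) and (Pentagon). Verification of the final candidate is mechanical; producing one is where the creative work lies.
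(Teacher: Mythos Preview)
Your proposal is correct and follows essentially the same approach as the paper: specify $F_4$ on the trivalent generator, verify the defining relations of $\GG_2(q_4)$ by direct computation in $\GPA(\Gamma_4)$, and then invoke Lemma~\ref{lem:simple-unit-unitary} to descend to a faithful $\ol{F_4}$. Your final paragraph on how to \emph{find} the solution (graph automorphisms, seeding from nearby $q$) differs from the paper's actual discovery strategy---which solves the linear subsystem first, substitutes into the quadratics to isolate small decoupled systems in a few variables, and then cascades---but this is a heuristic for producing the candidate, not part of the proof proper, and either route ends in the same mechanical verification.
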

\begin{proof}
    We first construct a monoidal functor $F_4: \GG_2(q_4) \hookrightarrow \GPA(\Gamma_4)$, and then 
    apply Lemma~\ref{lem:simple-unit-unitary}.

    Let 
    \[
        (p_1,q_1),\dots,(p_M,q_M)
    \]
    be the defining basis\footnote{ 
        See the attached Mathematica files for the specific ordering chosen. } 
    for $\Hom_{\GPA(\Gamma_4)}(2\to1)$; at level 4 we have $M=88$. 
    Then it must be that 
    \[
        F_4 \left( \skein{skein_figs/trivalent}{0.08} \right) = a_1(p_1,q_1)+\cdots+ a_M(p_M,q_M)
    \]
    for some $a_1,\dots,a_M \in \C$.
    The \ref{eq:Bigon} relation, when sent through $F$, becomes the system
    \[
        \sum_{i=1}^M a_i(p_i,q_i) \circ \sum_{j=1}^M a_j(q_j,p_j) = k^2 \sum_{e\in E(\Gamma)} (e,e).
    \]
    This system is quadratic in the $a_i$ since it involves up to two trivalent vertices on either side. 
    Similarly, the (Schur 0, 1) and (Rotate) relations therefore determine a system of linear equations; 
    the (Trigon), (Tetragon), and (Pentagon) give cubic, quartic, and quintic equations, respectively. 
    To show $F_4$ is a monoidal functor amounts to showing that the equations in the $a_i$ induced 
    by the relations in $G_2(q_4)$ are all satisfied.
    This is verified in the Mathematica notebook \verb|/code/level-4/nb-6_verify-g.nb|.
\end{proof}

The above proof does not mention the process used to arrive at such a definition of $F_4$.
We briefly mention it here.
For more details, see the author's PhD thesis \cite{hill_thesis}.

To go about solving these equations, it is often useful to solve the linear subsystem first and substitute the solution into the quadratic equations. 
For example, when we substitute the linear solution into the (Bigon) and (Tetragon) equations, 
we are able to isolate the following resulting equations:

\begin{equation*}
    a_{8}^2+a_{85}^2 = 4-\sqrt{2}+2 \sqrt{3}-\sqrt{6} 
\end{equation*}

\begin{equation*}
    a_{69}^2+\left(1+\sqrt{\frac{3}{2}}\right) a_{8}^2 = \frac{3+\sqrt{3}+\sqrt{6}}{\sqrt{2}} 
\end{equation*}

\begin{equation*}
    a_{69}^2 \left(\left(2+\sqrt{6}\right) a_{8}^2+\left(2+\sqrt{6}\right)
   a_{85}^2-2 \sqrt{2+\sqrt{3}}\right) = 5+\sqrt{2}+\sqrt{3}+2 \sqrt{6}
\end{equation*}

\begin{equation*}
    2 a_{69}^4+\left(5+2 \sqrt{6}\right)
   a_{85}^4 = \left(3+\sqrt{2}+\sqrt{3}+\sqrt{6}\right) a_{85}^2+3
   \sqrt{6}+\sqrt{3}+2 \sqrt{2}+7 
\end{equation*}

Up to three choices of sign, the solution to this system is
\begin{align*}
    a_{8} & = \sqrt{2+\sqrt{3}-\sqrt{2+\sqrt{3}}} \\
    a_{69} & = \sqrt{\frac{1}{2} \left(-1+\sqrt{2}+\sqrt{3}\right)} \\
    a_{85} & = \sqrt{2+\sqrt{3}-\sqrt{2+\sqrt{3}}} \\
\end{align*}
Similar equations containing $a_{31}$, $a_{55}$, and $a_{63}$ appear as well. 
We repeat the process and obtain the additional values
\begin{align*}
    a_{31} & = \sqrt{2+\sqrt{3}-\sqrt{2+\sqrt{3}}} \\
    a_{55} & = \sqrt{1-\sqrt{\frac{3}{2}}+\frac{1}{\sqrt{2}}} \\
    a_{63} & = \sqrt{2+\sqrt{3}-\sqrt{2+\sqrt{3}}} \\
\end{align*}
These six values begin a cascade of equation solving. 
They, along with the linear solution, reduce many of the original high-order equations to linear. 
We solve those, then repeat the process until we're forced to confront nonlinearity. 
The nonlinearity we encounter forces us to extract square roots, and ending up with a few degree-16 algebraic numbers. 
For instance,
\[
    a_{10} = \frac{1}{2} \left(\sqrt{1+\sqrt{6-3 \sqrt{3}}}+\sqrt{\sqrt{2+\sqrt{3}}-1}\right).
\]
Up to sign, the coordinates of $F_4 \left( \skein{skein_figs/trivalent}{0.08} \right)$ 
take on the following values:
\begin{align*}
    \alpha_1 & = \sqrt{\frac{1}{2} \left(1+2 \sqrt{2}+\sqrt{3}+\sqrt{6}\right)} \\
    \alpha_2 & = \sqrt{\frac{1}{2} \left(-1+\sqrt{2}+\sqrt{3}\right)} \\
    \alpha_3 & = \sqrt{\frac{3}{2} \left(\sqrt{2+\sqrt{3}}-1\right)} \\
    \alpha_4 & = \sqrt{2+\sqrt{3}-\sqrt{2+\sqrt{3}}} \\
    \alpha_5 & = \sqrt{\frac{1}{2} \left(\sqrt{2+\sqrt{3}}-1\right)}  \\
    \alpha_6 & = \sqrt{\frac{1}{2} \left(\sqrt{3}+\sqrt{2+\sqrt{3}}\right)} \\
    \alpha_7 & = \sqrt{1-\sqrt{\frac{3}{2}}+\frac{1}{\sqrt{2}}}  \\
    \alpha_8 & = \frac{1}{2} \left(\sqrt{1+\sqrt{6-3 \sqrt{3}}}-\sqrt{\sqrt{2+\sqrt{3}}-1}\right) \\
    \alpha_9 & = \frac{1}{2} \left(\sqrt{1+\sqrt{6-3 \sqrt{3}}}+\sqrt{\sqrt{2+\sqrt{3}}-1}\right). \\
\end{align*}
These coordinates of $F_4 \left( \skein{skein_figs/trivalent}{0.08} \right)$ give us a definition of $F_4$.

\subsection{Extension of level 4}\label{subsec:extend-level-4}
With our embedding of $\GG_2(q_4)$, i.e. the coordinates of $F_4 \left( \skein{skein_figs/trivalent}{0.08} \right)$, in hand, 
we now know where to find a subcategory of $\GPA(\Gamma_4)$ isomorphic to $\ol{\GG_2(q_4)}$.
In practice, we are relying on the fact that the free functor gives an embedding 
\[
    \ol{\Rep(U_{q_4}(\gg_2))} \hookrightarrow \ol{\Rep(U_{q_4}(\gg_2))}_{A_4}
\]
which takes the simple $\otimes$-generator $X_4$ to a simple $\otimes$-generator $Y_4\coloneq \FF_{A_4}(X_4)$.\footnote{
    Simplicity of $Y_4$ may be easily checked using the $X_4$ fusion graph of $\ol{\Rep(U_{q_4}(\gg_2))}$. }
Combine this with the facts that $\ol{\Rep(U_{q_4}(\gg_2))}_{A_4}$ contains a subcategory equivalent to $\Vecc{\Z_2}$,
and that both of the grouplike simples are subobjects of the square of the $\otimes$-generator $Y_4$.
One concludes that $\Hom_{\ol{\Rep(U_{q_4}(\gg_2))}_{A_4}}(Y_4^{\otimes2}\to Y_4^{\otimes2})$ 
contains a projection onto a $\Z_2$-like simple object 
which follows relations analogous to (Recouple), (Schur 0, 1), (Swap), and (decStick).
Now, to enlarge our copy of $\ol{\GG_2(q_4)}$ to a $\Z_2$-like extension, we begin by finding 
an element of $\Hom_{\GPA(\Gamma_4)}(2\to 2)$ which satisfies the relations
(Recouple), (Schur 0, 1), (Swap), and (decStick); 
the role will be played by $\skein{skein_figs/dec_tet_5}{0.08}$.

To this end, we take an approach similar to the one of the previous subsection.
We may identify $\GG_2(q_4)$ with the trivalent subcategory of $\DD_4$,
and now view $F_4$ as a functor out of this subcategory.
To finish the definition of $F_4$, we must find the image
\[
    F_4\left( \skein{skein_figs/dec_tet_5}{0.08} \right) \in \Hom_{\GPA(\Gamma_4)}(2\to2)
\]
satisfying (Recouple), (Schur 0, 1), (Swap), and (decStick).
We follow a process similar to solving for the image $F_4\left( \skein{skein_figs/trivalent}{0.08} \right)$.
That is, we assume that for some $b_1,\dots,b_N \in \C$ where $N \coloneq \tr(\Gamma_4^2 \cdot \Gamma_4^2) =400$ 
at level 4, we have
\[
    F_4\left( \skein{skein_figs/dec_tet_5}{0.08} \right) = \sum_{i=1}^N b_i (p'_i,q'_i).
\]
We may push the relations (Schur 0), (Schur 1), and (decStick) through $F_4$, and obtain 
a set of linear equations in the $b_i$.
We additionally obtain a large linear system which follows from Lemma~\ref{lem:general-half-braid}.
In the present context, this lemma states that our projection $\skein{skein_figs/dec_tet_5}{0.08}$ 
satisfies the relation
\begin{equation*}\tag{Half-braid}
    \skein{/skein_figs/hb_top}{0.11} = \skein{/skein_figs/hb_bottom}{0.15}
\end{equation*}
where the unoriented red strand indicates that the equation 
holds whenever we pick the same orientation for the left and right sides.
Solving these new equations gives us the coordinates of the projection $F_4\left( \skein{skein_figs/dec_tet_5}{0.08} \right)$.

\begin{theorem}\label{thm:init-D4-GPA-emb}
    There exists an element $P_4 \in\Hom_{\GPA(\Gamma_4)}(2\to 2)$ satisfying the relations
    (Schur 0, 1), (decStick), and (Half-braid)
\end{theorem}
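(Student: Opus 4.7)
The plan is to construct $P_4$ explicitly as a linear combination of the distinguished basis $(p_1',q_1'), \dots, (p_N',q_N')$ of $\Hom_{\GPA(\Gamma_4)}(2\to 2)$, where $N = \tr(\Gamma_4^2\cdot\Gamma_4^2) = 400$, and then verify the four required relations by direct computation. Concretely, write
\[
P_4 = \sum_{i=1}^N b_i (p_i', q_i')
\]
for unknowns $b_1,\dots,b_N \in \C$ and reduce the theorem to a statement about solvability of a system of scalar equations.

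The key observation is that each of (Schur 0), (Schur 1), (decStick), and (Half-braid) yields only equations which are \emph{linear} in the $b_i$. For (Schur 0), (Schur 1), and (decStick) this is immediate: each relation features a single instance of the unknown morphism composed with diagrams whose GPA coordinates are already fixed by the embedding $F_4$ from Theorem~\ref{thm:G2-level4-GPA-emb}. For (Half-braid), although $P_4$ appears on both sides, the crossings there arise from the braided structure of $\ol{\Rep(U_{q_4}(\gg_2))}$ pulled along $F_4$, and their GPA images are fully determined by the data of the trivalent vertex already computed. Thus the entire collection of constraints assembles into a single (linear) system in the $400$ unknowns $b_i$.

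The second step is to solve this system and exhibit a nonzero solution, which becomes the sought $P_4$. The main obstacle is computational rather than conceptual: at level $4$ the (Half-braid) relation alone contributes $7776$ equations, and their coefficients involve the nested algebraic numbers $\alpha_1,\dots,\alpha_9$ appearing in the previous subsection, so the Gaussian elimination must be carried out in a computer algebra system. Existence of at least one solution is guaranteed a priori by the representation-theoretic discussion in Subsection~\ref{subsec:extend-level-4}: the grouplike idempotent $P_{g_4}$ of Remark~\ref{rem:Pg-properties} lives in $\Hom_{\ol{\Rep(U_{q_4}(\gg_2))}_{A_4}}(Y_4^{\otimes 2}\to Y_4^{\otimes 2})$ and satisfies exactly the relations in question, so any GPA realization of the module category must place some element with these properties in $\Hom_{\GPA(\Gamma_4)}(2\to 2)$. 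Hence the linear system is consistent; one extracts a solution from the computed null space and confirms the four relations by a mechanical check in the accompanying Mathematica notebooks.
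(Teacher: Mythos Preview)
Your approach is essentially the paper's: set up the $400$ unknowns $b_i$, observe that (Schur 0), (Schur 1), (decStick), and (Half-braid) yield a linear system, and then verify a solution by machine computation. The paper's proof is literally a pointer to the Mathematica notebook, and the surrounding discussion matches your setup almost verbatim.

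One caveat: your a priori consistency argument is not airtight as stated. You argue that $P_{g_4}$ exists in $\ol{\Rep(U_{q_4}(\gg_2))}_{A_4}$ and therefore ``any GPA realization of the module category must place some element with these properties in $\Hom_{\GPA(\Gamma_4)}(2\to 2)$.'' But at this point in the paper's logical flow, all that has been established is an embedding $\ol{F_4}:\ol{\GG_2(q_4)}\hookrightarrow\GPA(\Gamma_4)$; it has \emph{not} yet been shown that this particular embedding extends to one of $\ol{\Rep(U_{q_4}(\gg_2))}_{A_4}$, nor that the induced module category is the one coming from $A_4$. That identification is the content of Theorem~\ref{thm:level-4}, which is downstream of the present theorem. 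So the a priori argument is circular, or at best heuristic. Fortunately it is also unnecessary: since you conclude by exhibiting and mechanically verifying an explicit solution, the proof stands without it.
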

\begin{proof}
    See the mathematica notebook \verb|/code/level-4/nb-6_verify-g.nb| for verification.
\end{proof}

Tables~\ref{tab:lvl-4-proj-coefs-1} and \ref{tab:lvl-4-proj-coefs-2} hold numerical approximations 
of the nonzero projection coordinates.
There are blocks of nonzero coordinates of length 4 and 25. 
These sizes, and the location of the nonzero real coordinates follow naturally 
when one considers Remark~\ref{rem:Pg-properties}.
Recall that the defining bases for the spaces 
\[
    \Hom_{\GPA(\Gamma)}(m\to n)
\]
are given in terms of pairs of paths. 
The (undirected) graphs we are using have at most a single edge between any two vertices. 
Hence an edge is equivalent to a pair of vertices, and a path is equivalent to an ordered tuple of vertices. 
For example, the path
\[
    p = v_1 \longrightarrow v_2 \longrightarrow v_3
\]
is equivalent to the ordered triple $(v_1,v_2,v_3)$. 

The only coordinates of the projection in the GPA which are nonzero are at those basis vectors 
\[
    (i \to\blank\to j, i \to\blank\to j)
\]
where $i\to j$ is a directed edge of the $g$-fusion graph. 
For $i=j=1,4$ there are two possible values for $\blank$; pairing them gives 4 pairs. 
For $i,j\neq 1,4$ there are five possible values for $\blank$; pairing them gives 25 pairs. 
The columns of Tables~\ref{tab:lvl-4-proj-coefs-1} and \ref{tab:lvl-4-proj-coefs-2} give the 
values of the coordinates of the projection, 
with dictionary ordering on the pairs of $\blank$ values. 
That is, the column labeled by $1\to\blank\to1$ shows the coordinates on the ordered basis
\begin{align*}
    (1\to 2 \to1, 1\to 2 \to1) \\
    (1\to 2 \to1, 1\to 3 \to1) \\
    (1\to 3 \to1, 1\to 2 \to1) \\
    (1\to 3 \to1, 1\to 3 \to1) \\
\end{align*}

With this ordering in mind, and recalling that the GPA's dagger operation swaps paths, 
the conjugate pairs appear where one would expect them.

\begin{table}
    \centering
    \begin{tabular}{|cc|} \hline
        $1\to\blank\to1$ & $4\to\blank\to4$ \\ \hline\hline
        $2.22$  & $2.22$ \\
        $-2.22$ & $-2.22$ \\
        $-2.22$ & $-2.22$ \\
        $2.22$  & $2.22$ \\ \hline
    \end{tabular}
    \caption{The size 4 blocks of nonzero projection coordinates.}
    \label{tab:lvl-4-proj-coefs-1}
\end{table}

\begin{table}
    \noindent\makebox[\textwidth]{
    \begin{tabular}{|cccc|} \hline
        $2\to\blank\to6$                 & $3\to\blank\to5$         & $5\to\blank\to3$           & $6\to\blank\to2$ \\ \hline\hline
        $0.44949$                        & $1$                      & $1$                        & $0.44949$ \\
        $0.474073 - 0.474073 i$          & $0.825482 + 0.564429 i$  & $0.825482 - 0.564429 i$    & $0.474073 + 0.474073 i$ \\
        $-0.123758 - 0.658919 i$         & $0.123758 + 0.658919 i$  & $0.123758 - 0.658919 i$    & $-0.123758 + 0.658919 i$ \\
        $-0.123758 + 0.658919 i$         & -$0.564429 + 0.825482 i$ & -$0.564429 - 0.825482 i$   & $-0.123758 - 0.658919 i$ \\
        $0.474073 + 0.474073 i$          & $0.931852 - 0.362839 i$  & $0.931852 + 0.362839 i$    & $0.474073 - 0.474073 i$ \\
        $0.474073 + 0.474073 i$          & $0.825482 - 0.564429 i$  & $0.825482 + 0.564429 i$    & $0.474073 - 0.474073 i$ \\
        $1$                              & $1$                      & $1$                        & 1 \\
        $0.564429 - 0.825482 i$          & $0.474073 + 0.474073 i$  & $0.474073 - 0.474073 i$    & $0.564429 + 0.825482 i$ \\
        $-0.825482 + 0.564429 i$         & $i$                      & $-i$                       & $-0.825482 - 0.564429 i$ \\ 
        $i$                              & $0.564429 - 0.825482 i$  & $0.564429 + 0.825482 i$    & -i \\
        $-0.123758 + 0.658919 i$         & $0.123758 - 0.658919 i$  & $0.123758 + 0.658919 i$    & $-0.123758 - 0.658919 i$ \\
        $0.564429 + 0.825482 i$          & $0.474073 - 0.474073 i$  & $0.474073 + 0.474073 i$    & $0.564429 - 0.825482 i$ \\
        $1$                              & $0.44949$                & $0.44949$                  & 1 \\
        $-0.931852 - 0.362839 i$         & $0.474073 + 0.474073 i$  & $0.474073 - 0.474073 i$    & $-0.931852 + 0.362839 i$ \\
        $-0.825482 + 0.564429 i$         & $-0.123758 - 0.658919 i$ & $-0.123758 + 0.658919 i$   & $-0.825482 - 0.564429 i$ \\
        $-0.123758 - 0.658919 i$         & -$0.564429 - 0.825482 i$ & -$0.564429 + 0.825482 i$   & $-0.123758 + 0.658919 i$ \\ 
        $-0.825482 - 0.564429 i$         & $-i$                     & $i$                        & $-0.825482 + 0.564429 i$ \\                 
        $-0.931852 + 0.362839 i$         & $0.474073 - 0.474073 i$  & $0.474073 + 0.474073 i$    & $-0.931852 - 0.362839 i$ \\                 
        $1$                              & $1$                      & $1$                        & $1$ \\                 
        $0.564429 - 0.825482 i$          & $-0.825482 - 0.564429 i$ & $-0.825482 + 0.564429 i$   & $0.564429 + 0.825482 i$ \\                 
        $0.474073 - 0.474073 i$          & $0.931852 + 0.362839 i$  & $0.931852 - 0.362839 i$    & $0.474073 + 0.474073 i$ \\                 
        $-i$                             & $0.564429 + 0.825482 i$  & $0.564429 - 0.825482 i$    & $i$ \\                 
        $-0.825482 - 0.564429 i$         & $-0.123758 + 0.658919 i$ & $-0.123758 - 0.658919 i$   & $-0.825482 + 0.564429 i$ \\                 
        $0.564429 + 0.825482 i$          & $-0.825482 + 0.564429 i$ & $-0.825482 - 0.564429 i$   & $0.564429 - 0.825482 i$ \\                 
        $1$                              & $1$                      & $1$                        & $1$ \\ \hline                 
    \end{tabular}
    }
    \caption{The size 25 blocks of nonzero projection coordinates.}
    \label{tab:lvl-4-proj-coefs-2}
\end{table}

\subsection{New Relations at Level 4}
We open this subsection with its primary result, then introduce the techniques we used to arrive at it.

\begin{theorem}\label{thm:D4-new-relns}
    The element $P_4 \in\Hom_{\GPA(\Gamma_4)}(2\to 2)$ satisfies the defining relations of $\DD_4$
    given in Definition~\ref{def:D4}.
\end{theorem}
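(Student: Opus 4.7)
The plan is to verify each defining relation of $\DD_4$ from Definition~\ref{def:D4} directly in $\GPA(\Gamma_4)$, using the explicit coordinates of $F_4$ on the trivalent vertex and of the projection $P_4$ established in the preceding subsection. Since $F_4$ extends to a monoidal functor on all of $\DD_4$ by sending the trivalent vertex and the decorated generator to these two elements, each diagrammatic relation of $\DD_4$ translates into an identity in the finite-dimensional hom-space $\Hom_{\GPA(\Gamma_4)}(m\to n)$ for an appropriate pair $(m,n)$. Composition, tensor product, and the rigidity maps in the GPA are all explicit (Definition~\ref{def:GPA}), so each diagram appearing in a relation becomes a computable linear combination of path-pair basis vectors, and verification reduces to comparing coefficients.

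I would group the relations by computational difficulty. The relations (Schur 0), (Schur 1), (decStick), and (Half-braid) were already verified in Theorem~\ref{thm:init-D4-GPA-emb}. The relations (Swap), (Reverse), and (decBigon) each involve at most two instances of $P_4$ and a scalar coefficient; these reduce to checking that short linear combinations of basis vectors vanish, and the conjugate symmetry visible in Tables~\ref{tab:lvl-4-proj-coefs-1} and \ref{tab:lvl-4-proj-coefs-2} makes (Reverse) essentially immediate. The relation (Recouple) requires composing two copies of $P_4$ and comparing with a simpler GPA expression; this is more involved but still contains only constantly many terms.

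The substantive work lies in (Change of Basis), (decTrigon), (decTetragon), and (decPentagon), whose diagrams involve compositions of several trivalent vertices together with one or more copies of $P_4$. For each such relation I would expand both sides in the path-pair basis, collect like terms, and verify equality against the structure constants prescribed in Definition~\ref{def:D4} (and in the Mathematica files for decPentagon). One important simplification is that the rotated and decorated diagrams on the right-hand side of (decTetragon) and (decPentagon) are themselves already known GPA elements; the verification is therefore a single linear-algebraic equality check in each relation, rather than a search for unknown coefficients.

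The main obstacle is purely computational. The expansions in (decTetragon) and (decPentagon) are large, and the coordinates of $F_4$ on the trivalent vertex involve nested radicals while the structure constants are powers of $q_4$. Exact symbolic arithmetic over the relevant number field is therefore necessary to avoid spurious mismatches at the many delicate cancellations; naive floating-point evaluation would at best give convincing numerical evidence, not a proof. The verification is carried out in the Mathematica notebook \verb|/code/level-4/nb-6_verify-g.nb|, and the proof amounts to observing that the coefficient equalities output by that notebook are exactly those required by the defining relations of $\DD_4$.
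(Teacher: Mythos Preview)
Your proposal is correct and matches the paper's approach: the proof is a direct computer verification in $\GPA(\Gamma_4)$ of each defining relation, carried out in the accompanying Mathematica notebook, and the paper offers no argument beyond this (it explains how the structure constants were \emph{discovered}, but the proof of the theorem itself is left to the machine check). Your more detailed breakdown of which relations are easy and which are computationally heavy is a helpful elaboration, but the underlying method is the same.
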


Our initial discovery, Theorem~\ref{thm:init-D4-GPA-emb}, makes no mention of the relations 
(Swap), (Change of Basis), (decTrigon), (decTetragon), or (decPentagon).
That is, the structure constants in these relations were unknown before exploring our GPA embedding.

In order to discover these new relations,
we hypothesize the form such a relation should take, impose that form, and solve for the structure constants.
For example we would suppose, based the simplicity of $Y_4$ and the invertibility of $g_4$, that an equation of the form
\[
    F_k\left( \skein{/skein_figs/swap_LHS}{0.15} \right) = \omega F_k\left( \skein{/skein_figs/swap_RHS}{0.15} \right)
\]
holds, for some $\omega$.
By ($\Z_n$), $\omega$ must be an $n$-th root of 1.
With our explicit GPA embeddings of the projection in hand, discovering what $\omega$ is becomes a matter of solving
a system of linear equations for one variable.

Discovering the (decTrigon) structure constants is similarly reduced to solving a system of linear equations of the form
\[
F_k\left( \skein{/skein_figs/dec_trigon_LHS}{0.1} \right)= 
        t_1 F_k\left( \skein{/skein_figs/trivalent}{0.1} \right) 
        + t_2 F_k\left( \skein{/skein_figs/triv_rightUp}{0.1} \right)
\]
for $t_1$ and $t_2$.
The same principle applies when searching for the relations (decTetragon) and (decPentagon).
In practice, discovering that, e.g., the coefficient of the summand \skein{/skein_figs/dec_pent_RHS1}{0.07}
in the (decPentagon) relation is $q^8 - q^4 + \frac{1}{q^4} + \frac{1}{q^2}$ 
required the use of some novel computational algebraic number theory.
For more details, see the author's PhD thesis \cite{hill_thesis}.

As an immediate consequence of Theorems~\ref{thm:G2-level4-GPA-emb} 
and \ref{thm:D4-new-relns} we have the following corollary.

\begin{corollary}\label{cor:D4-unitary}
    The category $\DD_4$ is a nontrivial $\Z_2$-like extension of $\GG_2(q_4)$, 
    and the semisimple quotient $\ol{\DD_4}$ is unitary.
\end{corollary}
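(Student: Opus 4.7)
The plan is to combine the general structural results of Section~\ref{sec:skein} with the explicit GPA embedding just constructed. First, $\DD_4$ is by construction a $\Z_2$-like extension of $\GG_2(q_4)$: Definition~\ref{def:D4} simply fixes the structure constants appearing in Definition~\ref{def:zn-ext} (with $n=2$), and the parameter-free defining relations — (Recouple), (Reverse), (Schur 0), (Schur 1), and (decStick) — are imposed as part of the definition and are verified to hold in the GPA in the attached Mathematica notebooks.

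Next, I would establish nontriviality and simple unit via the GPA embedding. Theorems~\ref{thm:G2-level4-GPA-emb} and \ref{thm:D4-new-relns} together produce a monoidal functor $F_4 : \DD_4 \to \GPA(\Gamma_4)$: the first provides $F_4$ on the trivalent subcategory, while the second certifies that the assignment $\skein{/skein_figs/dec_tet_5}{0.08} \mapsto P_4$ respects all of the defining relations of $\DD_4$ not already holding in $\GG_2(q_4)$. Since $F_4$ is monoidal, it sends $\id_{\unit}$ to a nonzero element of $\End_{\GPA(\Gamma_4)}(\unit)$, so $\dim\Hom_{\DD_4}(0 \to 0) \ge 1$. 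On the other hand, $\GG_2(q_4)$ is Euler-evaluable by the classical Kuperberg evaluation argument, so Theorem~\ref{thm:eval-criteria} applies to $\DD_4$ and Corollary~\ref{cor:Zn-simple-unit} gives $\dim\Hom_{\DD_4}(0 \to 0) \le 1$. Together these force $\dim\Hom_{\DD_4}(0 \to 0) = 1$, so $\DD_4$ is nontrivial and has simple unit.

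Finally, to deduce unitarity of $\ol{\DD_4}$, I would apply Lemma~\ref{lem:simple-unit-unitary} to $F_4$. The three hypotheses of that lemma are: $\DD_4$ has simple unit (just established), $\GPA(\Gamma_4)$ is unitary (a standard fact about graph planar algebras, see~\cite{jones_GPA}), and $F_4$ is a $\dagger$-functor. I expect the third condition to be the main obstacle, and it is precisely this point that must be checked explicitly from the coordinates of Section~\ref{sec:gpa-emb}: the image of the trivalent vertex must be self-adjoint under the GPA dagger (which swaps the two paths in each basis pair), and the image $P_4$ of the projection generator must satisfy $P_4^\dagger = P_4$. The latter is forced abstractly by Remark~\ref{rem:Pg-properties} and can be read directly off Tables~\ref{tab:lvl-4-proj-coefs-1} and~\ref{tab:lvl-4-proj-coefs-2} by inspecting the conjugate symmetry of entries related by the path-swap. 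Once $\dagger$-compatibility of $F_4$ is verified, Lemma~\ref{lem:simple-unit-unitary} furnishes a $\dagger$-embedding $\ol{F_4} : \ol{\DD_4} \hookrightarrow \GPA(\Gamma_4)$, and unitarity of $\ol{\DD_4}$ is inherited from the unitary target.
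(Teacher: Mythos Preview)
Your proposal is correct and follows essentially the same approach as the paper: nontriviality via the GPA embedding, then unitarity of $\ol{\DD_4}$ via Lemma~\ref{lem:simple-unit-unitary} using the unitarity of $\GPA(\Gamma_4)$. You supply more detail than the paper's two-sentence proof --- in particular, you make explicit the appeal to Corollary~\ref{cor:Zn-simple-unit} for the simple-unit hypothesis and you flag the $\dagger$-compatibility of $F_4$ as a point requiring verification --- but these are exactly the hypotheses the paper is implicitly invoking.
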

\begin{proof}
    We deduce $\DD_4$ is nonzero by its embedding into a nonzero subcategory of $\GPA(\Gamma_4)$.
    Unitarity of its semisimple quotient follows from noting the unitarity of $\GPA(\Gamma_4)$,
    and applying Lemma~\ref{lem:simple-unit-unitary}.
\end{proof}

\begin{corollary}\label{cor:D4-dagger-emb}
    The embedding $\GG_2(q_4) \hookrightarrow \DD_4$ descends to a 
    $\dagger$-embedding $\ol{\GG_2(q_4)} \hookrightarrow \ol{\DD_4}$.
\end{corollary}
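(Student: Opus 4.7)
The plan is to simply instantiate Lemma~\ref{lem:simple-unit-unitary}, taking $\CC = \GG_2(q_4)$, taking $\DD = \ol{\DD_4}$, and taking $\FF$ to be the composition of the inclusion $\GG_2(q_4) \hookrightarrow \DD_4$ with the quotient functor $\DD_4 \twoheadrightarrow \ol{\DD_4}$. Once the hypotheses of that lemma are verified in this setup, the conclusion is exactly the claim: $\FF$ descends to a $\dagger$-embedding $\ol{\GG_2(q_4)} \hookrightarrow \ol{\DD_4}$, which is what the corollary asserts.

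First I would verify the three hypotheses of the lemma. Simplicity of the unit of $\GG_2(q_4)$ follows from evaluability of the $\GG_2(q)$ skein theory, which gives $\dim\End_{\GG_2(q_4)}(\unit)=1$. Unitarity of $\ol{\DD_4}$ is exactly the second statement of Corollary~\ref{cor:D4-unitary}. What remains is to check that $\FF$ is a $\dagger$-functor. The inclusion $\GG_2(q_4) \hookrightarrow \DD_4$ is, by construction, the inclusion of the diagrammatic subcategory generated by the black trivalent vertex, and the dagger structure on $\DD_4$ acts on diagrams by vertical reflection, which restricts to the dagger on $\GG_2(q_4)$. The quotient $\DD_4 \twoheadrightarrow \ol{\DD_4}$ is a $\dagger$-functor because the negligible ideal is stable under $\dagger$. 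Composing two $\dagger$-functors gives a $\dagger$-functor, so $\FF$ is as required.

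Once those checks are in place, invoking Lemma~\ref{lem:simple-unit-unitary} yields a $\dagger$-functor $\ol{\FF}:\ol{\GG_2(q_4)} \to \ol{\DD_4}$ making the obvious triangle commute, and the lemma further asserts that $\ol{\FF}$ is an embedding (that is, faithful). This is the functor claimed in the corollary.

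The only point requiring care is the $\dagger$-functoriality of the inclusion $\GG_2(q_4) \hookrightarrow \DD_4$, and even that is essentially formal: both categories are pivotal planar-algebraic, built from the same trivalent generator, and the dagger in each is implemented by the same diagrammatic reflection operation. I do not anticipate any genuine obstacle; this corollary is meant as a packaging of Corollary~\ref{cor:D4-unitary} together with the universal property of the semisimple quotient given by Lemma~\ref{lem:simple-unit-unitary}.
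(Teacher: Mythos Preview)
Your proposal is correct and follows essentially the same approach as the paper: compose the inclusion $\GG_2(q_4)\hookrightarrow\DD_4$ with the quotient $\DD_4\twoheadrightarrow\ol{\DD_4}$, then invoke Lemma~\ref{lem:simple-unit-unitary} using the unitarity of $\ol{\DD_4}$ established in Corollary~\ref{cor:D4-unitary}. Your write-up is in fact more careful than the paper's one-line version, since you explicitly verify the simple-unit and $\dagger$-functor hypotheses.
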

\begin{proof}
    Use Lemma~\ref{lem:simple-unit-unitary} again, and compose the induced functor with the functor
    \[
        \DD_4 \onto \ol{\DD_4}
    \]
    onto the semisimple quotient.
\end{proof}
Later, we will take the Karoubi completion of this embedding.
For now, we turn to a discussion of how we obtain the $\DD_4$ structure constants.

\subsection{Level 3 Trivalent Embedding}
We now tell a similar story, but at level 3.
However, for the trivalent coefficients we use numerical approximations here, 
and relegate the actual numbers to the attached Mathematica files.
We were unable to find reader-friendly representations of the GPA-embedding coordinates or structure constants.
The coordinates for the trivalent GPA embedding were algebraic numbers of degree 12 or 24.
Even worse, the structure constants for the relations (Change of Basis), (decTrigon), (decTetragon), and (decPentagon) 
are all of the form
\[
    c_1 + c_2 \alpha^2 + c_3 q_3 + c_4 \alpha^2 q_3
\]
where $c_i\in \Q(q_3+q_3^{-1})$, the maximal real subfield of $\Q(q_3)$, 
and $\alpha$ is an algebraic number of degree 24 which appears as a coordinate of 
$F_3 \left( \skein{skein_figs/trivalent}{0.08} \right)$.
For the relations (Change of Basis) and (decTrigon), the power basis coordinates of the $c_i$ are lowest form rational numbers 
whose numerators have one or two digits.
For (decTetragon), the numerators and denominators of the power basis coordinates of the $c_i$ have around 10 digits on average.
In the (decPentagon) relation, this digit count explodes to around 135.

Let $\Gamma_3$ be the graph given in the right side of Figure~\ref{fig:new-graphs}.
Set $q_3 \coloneq e^{2\pi i/42}$.
The following result gives us a GPA embedding of $\ol{\GG_2(q_3)}$.

\begin{theorem}\label{thm:G2-level3-GPA-emb}
    There is a faithful monoidal functor $\ol{F_3}: \ol{\GG_2(q_3)} \to \GPA(\Gamma_3)$.
\end{theorem}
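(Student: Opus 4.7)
The plan is to mirror the proof of Theorem~\ref{thm:G2-level4-GPA-emb}, now using the graph $\Gamma_3$ on the right of Figure~\ref{fig:new-graphs}. First I would construct a monoidal functor $F_3 : \GG_2(q_3) \to \GPA(\Gamma_3)$ by specifying the image
\[
    F_3\left( \skein{skein_figs/trivalent}{0.08} \right) = \sum_{i=1}^{M'} a_i (p_i, q_i) \in \Hom_{\GPA(\Gamma_3)}(2 \to 1),
\]
where $(p_i, q_i)$ is the defining basis of this hom-space and $M' = \tr(\Gamma_3^2 \cdot \Gamma_3)$. Since $\GG_2(q_3)$ is generated as a planar algebra by the trivalent vertex, a choice of such coefficients $a_i \in \C$ determines a monoidal functor if and only if the seven defining relations (Loop), (Lollipop), (Rotate), (Bigon), (Trigon), (Tetragon), and (Pentagon) pass through $F_3$. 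As explained before Theorem~\ref{thm:G2-level4-GPA-emb}, each of these relations becomes a polynomial equation (respectively of degree $1,1,1,2,3,4,5$) in the $a_i$.

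Next I would produce an explicit solution $(a_1,\dots,a_{M'})$ to this system using the same cascade-style solution procedure described for level 4: solve the linear subsystem (from (Rotate), (Lollipop), and the Schur-type equations) first, substitute into the quadratic (Bigon) equations to extract square roots, then feed the result into the (Trigon), (Tetragon), and (Pentagon) equations. The author notes that at level 3 the resulting coordinates are algebraic numbers of degree 12 or 24, so unlike level 4 there is little hope of a clean closed form; nevertheless the approximate numerical values together with their minimal polynomials suffice to define $F_3$. Verifying that these coordinates do satisfy every defining relation of $\GG_2(q_3)$ is then a finite computation, deferred to the Mathematica notebook \verb|/code/level-3/nb-6_verify-g.nb|.

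Once $F_3$ is shown to be a well-defined monoidal functor, the target $\GPA(\Gamma_3)$ is unitary and $\GG_2(q_3)$ has simple unit (we may check, as a sanity check, that the loop evaluates correctly and that no nontrivial element of $\End_{\GG_2(q_3)}(\unit)$ is killed). Applying Lemma~\ref{lem:simple-unit-unitary} then gives a $\dagger$-embedding $\ol{F_3} : \ol{\GG_2(q_3)} \hookrightarrow \GPA(\Gamma_3)$, as required.

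The main obstacle is the first step: finding the coefficients $a_i$ explicitly. The degree of the algebraic numbers involved (and the fact that the underlying graph $\Gamma_3$ is less symmetric than $\Gamma_4$) means the equation-solving cascade does not collapse as cleanly, and more care is needed when choosing signs for the extracted square roots so that the higher-degree relations (Tetragon) and (Pentagon) remain satisfiable. Once a consistent choice is identified and recorded in the accompanying notebook, the verification of faithful monoidality and the descent to the semisimple quotient are formal.
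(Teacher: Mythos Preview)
Your proposal is correct and follows essentially the same approach as the paper: construct $F_3$ by solving for the coordinates of the trivalent vertex in $\Hom_{\GPA(\Gamma_3)}(2\to1)$, verify the defining relations of $\GG_2(q_3)$ computationally in the attached notebook, and then invoke Lemma~\ref{lem:simple-unit-unitary} to pass to the faithful functor $\ol{F_3}$ on the semisimple quotient. The paper's own proof is in fact terser than yours, simply pointing to the level~4 argument and the verification notebook.
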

\begin{proof}
    Similar to Theorem~\ref{thm:G2-level4-GPA-emb}.
    See the Mathematica notebook \[\verb|/code/level-4/nb-6_verify-g.nb|\] for 
    verification of the necessary equations.
\end{proof}

Despite the more difficult numbers, we are able to recover some of the structure of the fusion graph in the GPA coordinates.
Given a path $p$, which paths $q$ pair validly with $p$ to form a basis element of the $2\to 1$ hom-space of a GPA? 
Well, by definition, $q$ must be parallel to $p$; i.e. the sources and targets of $p$ and $q$ must coincide. 
It follows that the only valid pairing for such $p$ is
\[
    q = v_1 \longrightarrow v_3,
\]
which may also be represented as $(v_1,v_3)$. 
So the only $2\to 1$ basis element which $p$ appears in is
\[
    \big( (v_1,v_2,v_3), (v_1,v_3) \big).
\]
But the parallel condition defining basis elements makes including $(v_1,v_3)$ redundant; 
we might just as well have called the basis element by 
\[
    (v_1,v_2,v_3).
\]
This is how we refer to $2\to 1$ GPA basis elements. 
Indeed, in Table~\ref{tab:lvl-3-triv-coefs}, the first two columns combine to specify which basis elements are being specified, 
and the third column gives the approximate coordinate of the trivalent embedding on that basis element. 
For example, the first row of Table~\ref{tab:lvl-3-triv-coefs} tells us that the 
coordinate of the $(2,2,2)$ basis element is approximately $1.08393$; 
the second row tells us that the coordinate of the $(4,2,3)$ basis element is approximately $0.619371$. 

Paths of the form $(i,j,i)$, $(i,i,j)$, or $(i,j,j)$ for $i,j\neq1$ require a bit more care to describe. 
There is nontrivial interplay with the graph symmetry swapping vertices 2 and 4. 
When these two vertices are swapped, a path whose coordinate has absolute value $0.155691$ is sent to 
one whose coordinate has absolute value $1.69414$. 
The nine paths whose coordinates have absolute value $0.155691$ are:
\[
    (2,3,3),\hspace{0.2em} (3,3,2),\hspace{0.2em} (3,2,3),\hspace{0.2em} (2,4,2),\hspace{0.2em} (4,3,4),\hspace{0.2em} (2,2,4),\hspace{0.2em} (3,4,4),\hspace{0.2em} (4,2,2),\hspace{0.2em} (4,4,3) 
\]
One may use the symmetry $2 \xleftrightarrow{} 4$ to find the rest of the coordinates.

\begin{table}
    \centering
    \begin{tabular}{|cc|c|} \hline
        Vertex Path & Conditions & Coefficient \\ \hline\hline
        $(i,i,i)$ &  $i\neq1$ & 1.08393 \\[10pt]  \hline
        $(i,j,k)$ &  $\{i,j,k\}=\{2,3,4\}$   & 0.619371 \\[10pt] \hline
        $(i,1,k)$ &  $i,k\neq 1$, $i\neq k$ & 1.69414 \\[10pt] \hline
        $(i,1,i)$ &  $i\neq 1$   & 0.861006 \\[10pt] \hline
        $(i,i,1)$ or $(1,i,i)$ &  $i\neq1$   & 0.967919 \\[10pt] \hline
    \end{tabular}
    \caption{Level 3 trivalent embedding coefficients.}
    \label{tab:lvl-3-triv-coefs}
\end{table}

\subsection{Extension of level 3}
We quickly give the analogous results to Theorem~\ref{thm:D4-new-relns} 
and Corollaries~\ref{cor:D4-unitary} and \ref{cor:D4-dagger-emb}.
However, we skip the intermediate version of Theorem~\ref{thm:init-D4-GPA-emb}.

\begin{theorem}\label{thm:D3-GPA-emb}
    There exists an element $P_3 \in\Hom_{\GPA(\Gamma_3)}(2\to 2)$ satisfying the $\Z_3$-like extension relations, 
    with structure constants given in the Mathematica notebook \verb|/code/level-3/nb-6_verify-g.nb|.
\end{theorem}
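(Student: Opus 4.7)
The plan is to mirror the level-4 construction carried out in Theorems~\ref{thm:init-D4-GPA-emb} and \ref{thm:D4-new-relns}, but with the index $n=3$ replacing $n=2$ throughout. First, using the faithful monoidal functor $\ol{F_3}: \ol{\GG_2(q_3)} \to \GPA(\Gamma_3)$ supplied by Theorem~\ref{thm:G2-level3-GPA-emb}, I would fix the image of the black trivalent vertex and identify $\GG_2(q_3)$ with a subcategory of $\DD_3$. The sought element $P_3$ lives in $\Hom_{\GPA(\Gamma_3)}(2\to 2)$, which has a basis of parallel path pairs, so I would parameterize
\[
    P_3 = \sum_{i=1}^{N} b_i (p'_i, q'_i),\qquad N \coloneq \tr(\Gamma_3^2 \cdot \Gamma_3^2),
\]
with unknown complex scalars $b_i$.

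Next, I would push through $\ol{F_3}$ the relations in Definition~\ref{def:zn-ext} that are linear in the new generator, namely (Schur 0), (Schur 1), (decStick), together with the (Half-braid) equations provided by Lemma~\ref{lem:general-half-braid}. Each of these becomes a linear equation (or a large linear system) in the $b_i$, and (Half-braid) alone contributes the $2970$ equations mentioned in the introduction. I would solve this combined linear system to produce an explicit $P_3$; this locates the $\Z_3$-like projection inside $\GPA(\Gamma_3)$, analogous to Theorem~\ref{thm:init-D4-GPA-emb}. The existence of such a $P_3$ is guaranteed representation-theoretically because the free functor lands the $\Z_3$-like grouplike $g_3$ inside $Y_3^{\otimes 2}$, so we expect a nonzero solution.

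With $P_3$ in hand, the remaining defining relations (Swap), (Change of Basis), (decTrigon), (decTetragon), (decPentagon) are nonlinear in the generators but \emph{linear in their structure constants} once one fixes a template. For each such relation I would hypothesize the form it must take (e.g.\ for (Swap), $\omega$ is forced to be a cube root of unity by ($\Z_n$), and for (Change of Basis) the right-hand side spans the $2\to 1$ hom-space in three terms), substitute the explicit coordinates of $P_3$ and of the trivalent embedding into $\GPA(\Gamma_3)$, and solve the resulting linear system over $\C$ for the unknown scalars. Because the spanning condition for $\skein{/skein_figs/triv_rightUp_i}{0.075}$, $i=0,1,2$, holds inside the GPA image, the solution is unique once it exists. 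Verification that the discovered constants do satisfy the relations is then a finite computation, carried out in the Mathematica notebook \verb|/code/level-3/nb-6_verify-g.nb|.

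The main obstacle is arithmetic, not conceptual. Unlike level 4, the GPA coordinates of $\ol{F_3}$ are algebraic numbers of degree up to 24, and the (decTetragon) and especially (decPentagon) structure constants live in an order whose power-basis coefficients have numerators and denominators of many dozens of digits. Recognizing the rational closed forms from floating-point numerics therefore requires careful computational algebraic number theory inside $\Q(q_3 + q_3^{-1})[\alpha]$, along the lines referenced in the author's thesis \cite{hill_thesis}. Once those exact forms are extracted, verifying the relations by substitution into the explicit $P_3$ is mechanical, and completes the proof.
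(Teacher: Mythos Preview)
Your proposal is correct and follows essentially the same approach as the paper. The paper's own proof is simply a pointer to direct computational verification in the Mathematica notebook; your write-up recapitulates the level-4 discovery methodology (parameterize $P_3$, solve the linear system from (Schur~0,1), (decStick), and (Half-braid), then extract the remaining structure constants and verify), which is exactly what the paper does but chooses not to spell out again at level~3.
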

\begin{proof}
    This result is again proved by direct verification of the required equations.
    See the Mathematica notebook \verb|/code/level-3/nb-6_verify-g.nb|.
\end{proof}

Similarly to the level 4 case, Theorems~\ref{thm:G2-level3-GPA-emb} and \ref{thm:D3-GPA-emb}
imply nontriviality and unitarity of $\ol{\DD_3}$.

\begin{corollary}\label{cor:D3-unitary}
    The category $\DD_3$ is a nontrivial $\Z_3$-like extension of $\GG_2(q_3)$, 
    and the semisimple quotient $\ol{\DD_3}$ is unitary.
\end{corollary}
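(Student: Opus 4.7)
The plan is to mirror the argument used for Corollary~\ref{cor:D4-unitary}, combining the level 3 analogues of Theorems~\ref{thm:G2-level4-GPA-emb} and \ref{thm:D4-new-relns} with the general evaluability result from Section~\ref{sec:skein} and the descent lemma for unitarity.

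First, I would assemble the two pieces at level 3 into a single monoidal functor $F_3 : \DD_3 \to \GPA(\Gamma_3)$. Theorem~\ref{thm:G2-level3-GPA-emb} gives the image of the trivalent vertex, and Theorem~\ref{thm:D3-GPA-emb} supplies an element $P_3$ of $\Hom_{\GPA(\Gamma_3)}(2\to 2)$ satisfying the $\Z_3$-like extension relations listed in Definition~\ref{def:zn-ext}. By design, the structure constants defining $\DD_3$ match those verified for $P_3$ in the cited Mathematica notebook, so sending the red vertex generator to $P_3$ and the trivalent vertex to $F_3\!\left(\skein{/skein_figs/trivalent}{0.08}\right)$ extends $F_3$ consistently over all defining relations.

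Next, nontriviality of $\DD_3$ is immediate: the image of the trivalent generator in $\GPA(\Gamma_3)$ is a nonzero morphism (see Table~\ref{tab:lvl-3-triv-coefs}), so $F_3$ has nonzero image and in particular $\DD_3 \neq 0$. To deduce unitarity of the semisimple quotient, I would verify that $\DD_3$ has a simple unit. Since $\GG_2(q_3)$ is Euler-evaluable (its standard skein-theoretic evaluation algorithm strictly reduces the number of internal faces), $\DD_3$ falls under the hypotheses of Theorem~\ref{thm:eval-criteria}, and Corollary~\ref{cor:Zn-simple-unit} yields $\dim \Hom_{\DD_3}(0\to 0) \leq 1$. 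Combined with nontriviality this gives exact simplicity of the unit.

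Finally, I would invoke Lemma~\ref{lem:simple-unit-unitary} applied to $F_3 : \DD_3 \to \GPA(\Gamma_3)$. The graph planar algebra is unitary, and $F_3$ is a dagger functor because $P_3$ is self-adjoint (its coordinate matrix displayed in the notebook is Hermitian, as the red vertex corresponds to a projection onto the simple grouplike $g_3$ and hence inherits $P_3^\dagger = P_3$ from Remark~\ref{rem:Pg-properties}). The lemma then produces a $\dagger$-embedding $\ol{F_3} : \ol{\DD_3} \hookrightarrow \GPA(\Gamma_3)$, in particular making $\ol{\DD_3}$ unitary. The main subtlety here is not the algebra but bookkeeping: one must check that the specific $P_3$ constructed at level 3 is indeed self-adjoint in the GPA, so that $F_3$ genuinely preserves the dagger; this reduces to an inspection of the coordinate table and is the level 3 analogue of the checks underlying Corollary~\ref{cor:D4-dagger-emb}.
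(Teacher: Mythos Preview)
Your proposal is correct and follows essentially the same approach as the paper, which simply states that nontriviality and unitarity follow from Theorems~\ref{thm:G2-level3-GPA-emb} and \ref{thm:D3-GPA-emb} exactly as in the level~4 case (Corollary~\ref{cor:D4-unitary}). If anything, you are more careful than the paper: you make explicit the appeal to Corollary~\ref{cor:Zn-simple-unit} for simplicity of the unit and the self-adjointness of $P_3$ needed for $F_3$ to be a $\dagger$-functor, both of which the paper leaves implicit in its one-line invocation of Lemma~\ref{lem:simple-unit-unitary}.
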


\begin{corollary}\label{cor:D3-dagger-emb}
    The embedding $\GG_2(q_3) \hookrightarrow \DD_3$ descends to a 
    $\dagger$-embedding $\ol{\GG_2(q_3)} \hookrightarrow \ol{\DD_3}$.
\end{corollary}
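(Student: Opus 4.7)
The plan is to mimic the proof of Corollary~\ref{cor:D4-dagger-emb} verbatim, just with the subscripts shifted from $4$ to $3$. The machinery is already in place: we have the defining inclusion of $\GG_2(q_3)$ into $\DD_3$ as the trivalent subcategory, the quotient functor $\DD_3 \onto \ol{\DD_3}$ onto the semisimple quotient, and Lemma~\ref{lem:simple-unit-unitary} which converts a $\dagger$-functor from a simple-unit category to a unitary category into a $\dagger$-embedding of the semisimple quotient.

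First I would form the composite $\GG_2(q_3) \hookrightarrow \DD_3 \onto \ol{\DD_3}$. This is a $\dagger$-functor because both stages are (the inclusion respects the dagger structure since the diagrammatic dagger on $\GG_2(q_3)$ is inherited from $\DD_3$, and quotients by the negligible ideal are $\dagger$-functors on dagger categories). Next I would note that $\GG_2(q_3)$ has simple unit; this is standard for trivalent categories, and in any case is implied by Theorem~\ref{thm:G2-level3-GPA-emb}, which provides a nonzero functor to a GPA whose $0\to 0$ hom-space is one-dimensional. The target $\ol{\DD_3}$ is unitary by Corollary~\ref{cor:D3-unitary}.

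With these three ingredients in hand, Lemma~\ref{lem:simple-unit-unitary} applies directly and produces the desired $\dagger$-embedding $\ol{\GG_2(q_3)} \hookrightarrow \ol{\DD_3}$ making the relevant triangle commute. There is no real obstacle here; the only thing that needed doing was establishing unitarity of $\ol{\DD_3}$, which was the content of Corollary~\ref{cor:D3-unitary} and is what makes the level-3 and level-4 arguments formally identical despite the level-3 structure constants being vastly more complicated.
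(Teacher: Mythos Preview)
Your proposal is correct and follows essentially the same approach as the paper: the level-4 proof of Corollary~\ref{cor:D4-dagger-emb} consists of exactly the two ingredients you use, namely composing with the quotient $\DD_4 \onto \ol{\DD_4}$ and invoking Lemma~\ref{lem:simple-unit-unitary}, and the level-3 corollary is stated without proof as the evident analogue.
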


\begin{table}
    \noindent\makebox[\textwidth]{
    \begin{tabular}{|cccc|} \hline
        $1\to\blank\to1$                    & $2\to\blank\to4$                  & $3\to\blank\to2$               & $4\to\blank\to3$ \\ \hline\hline
        $1.26376$              & $0.791288$                & $0.791288 $              & $0.791288$ \\
        $-0.631881-1.09445 i$   & $0.567622 - 0.684904 i$   & $0.876955 + 0.149123 i$  & $0.674406 + 0.580055 i$ \\
        $-0.631881+1.09445 i$   & $0.674406 + 0.580055 i$   & $0.567622 - 0.684904 i$  & $0.876955 + 0.149123 i$ \\
        $-0.631881+1.09445 i$   & $-0.876955 - 0.149123 i$  & $-0.674406 - 0.580055 i$ & $0.567622 - 0.684904 i$ \\
        $1.26376$               & $0.567622 + 0.684904 i$   & $0.876955 - 0.149123 i$ & $0.674406 - 0.580055 i$ \\
        $-0.631881-1.09445 i$   & $1$                       & $1$                      & $1$ \\
        $-0.631881-1.09445 i$   & $-0.0182917 + 0.999833 i$ & $0.5 - 0.866025 i$       & $0.856735 - 0.515757 i$ \\
        $-0.631881+1.09445i$    & $-0.5 - 0.866025 i$       & $-0.856735 - 0.515757 i$ & $-0.0182917 - 0.999833 i$ \\
        $1.26376$               & $0.674406 - 0.580055 i$   & $0.567622 + 0.684904 i$  & $0.876955 - 0.149123 i$ \\ 
                              & $-0.0182917 - 0.999833 i$ & $0.5 + 0.866025 i$       & $0.856735 + 0.515757 i$ \\
                              & $1$                       & $1$                      & $1$ \\
                              & $-0.856735 + 0.515757 i$  & $0.0182917 - 0.999833 i$ & $0.5 - 0.866025 i$ \\
                              & $-0.876955 + 0.149123 i$  & $-0.674406 + 0.580055 i$ & $0.567622 + 0.684904 i$ \\
                              & $-0.5 + 0.866025 i$       & $-0.856735 + 0.515757 i$ & $-0.0182917 + 0.999833 i$ \\
                              & $-0.856735 - 0.515757 i$  & $0.0182917 + 0.999833 i$ & $0.5 + 0.866025 i$ \\
                              & $1$                       & $1$                      & $1$ \\ \hline
    \end{tabular}
    }
    \caption{Level 3 projection embedding coefficients.}
    \label{tab:lvl-3-proj-coefs}
\end{table}

\section{Equivalences}\label{sec:equivalences}

In this section we prove that the categories $\DD_3$ and $\DD_4$ are indeed
presentations for quantum subgroups of $\GG_2(q_3)$ and $\GG_2(q_4)$, respectively.
It is necessary to note that from \cite{gannon_exotic_q_subgroups_extensions-II} we know that 
the only nontrivial etale algebras in $\ol{\Rep(U_{q_3}(\gg_2))}$ and 
$\ol{\Rep(U_{q_4}(\gg_2))}$ are $A_3$ and $A_4$, respectively.

We first state and prove the theorem at level 4.

\begin{theorem}\label{thm:level-4}
    There is a monoidal equivalence
    \[
        \Ab(\ol{\DD_4}) \cong \ol{ \Rep(U_{q_4}(\gg_2))}_{A_4}.
    \]
\end{theorem}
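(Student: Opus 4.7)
The plan is to apply Lemma~\ref{lem:exact-functor} to a suitable inclusion $\iota\colon \ol{\Rep(U_{q_4}(\gg_2))} \hookrightarrow \Ab(\ol{\DD_4})$, and then identify the resulting etale algebra object with $A_4$ by invoking the classification of etale algebras in $\ol{\Rep(U_{q_4}(\gg_2))}$.

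First, I build the inclusion $\iota$. Corollary~\ref{cor:D4-dagger-emb} supplies a $\dagger$-embedding $\ol{\GG_2(q_4)} \hookrightarrow \ol{\DD_4}$ at the level of planar algebras. Taking the Cauchy completion and applying the presentation $\ol{\Rep(U_{q_4}(\gg_2))} \cong \Ab(\ol{\GG_2(q_4)})$ yields a fully faithful monoidal $\dagger$-functor $\iota$. Evaluability of $\DD_4$ (Theorem~\ref{thm:eval-criteria}) together with the faithful GPA embedding constructed in Theorems~\ref{thm:G2-level4-GPA-emb} and~\ref{thm:D4-new-relns} show that $\Ab(\ol{\DD_4})$ is a unitary fusion category. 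In particular $\iota$ admits a faithful exact right adjoint $R$. Setting $A \coloneqq R(\unit)$, Lemma~\ref{lem:exact-functor} provides an equivalence
\[
\Ab(\ol{\DD_4}) \simeq \ol{\Rep(U_{q_4}(\gg_2))}_A,
\]
and the only remaining task is to show $A \cong A_4$.

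Here I appeal to \cite{gannon_exotic_q_subgroups_extensions-II}, which classifies the etale algebras in $\ol{\Rep(U_{q_4}(\gg_2))}$ as $\unit$ and $A_4$. It therefore suffices to verify that $A$ is a nontrivial etale algebra. Connectedness, i.e.\ $\dim\Hom(\unit \to A) = 1$, follows from simplicity of the unit in $\Ab(\ol{\DD_4})$ (Corollary~\ref{cor:Zn-simple-unit}); separability is automatic in the semisimple setting; commutativity comes from the half-braid of Lemma~\ref{lem:general-half-braid}, which exchanges the two natural multiplications on $R(\unit)$. Nontriviality $A\not\cong\unit$ holds because $\skein{/skein_figs/dec_tet_5}{0.08}$ projects onto a simple summand of $\iota(V_{\Lambda_1})^{\otimes 2}$ corresponding to the grouplike $g_4$ (Remark~\ref{rem:Pg-properties}), and this summand is not in the essential image of $\iota$; the nonvanishing of the GPA image $P_4$ in Theorem~\ref{thm:D4-new-relns} guarantees this new simple is genuinely present.

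The main obstacle I anticipate is the rigorous extraction of the etale structure on the abstractly-defined algebra $A = R(\unit)$ from the diagrammatic data: although Lemma~\ref{lem:general-half-braid} supplies the needed commutativity at the level of morphisms between free modules, reconciling this with the multiplication on $R(\unit)$ induced by the adjunction requires some categorical bookkeeping. Fortunately, the uniqueness statement of \cite{gannon_exotic_q_subgroups_extensions-II} allows this verification to be coarse: once $A$ is known to be some nontrivial etale algebra, it must be $A_4$, sparing us from computing its structure in detail.
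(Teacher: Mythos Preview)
Your overall strategy matches the paper's: pass to Cauchy completions, apply Lemma~\ref{lem:exact-functor} to obtain $\Ab(\ol{\DD_4}) \cong \ol{\Rep(U_{q_4}(\gg_2))}_{B_4}$ for some algebra $B_4$, and then use the classification of etale algebras to force $B_4 \cong A_4$. Two points deserve correction, however.

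First, the functor $\iota$ is \emph{not} fully faithful, and this slip is self-defeating. The entire point of the extension is that $\ol{\DD_4}$ has strictly larger hom-spaces than $\ol{\GG_2(q_4)}$; for instance $\dim\Hom_{\ol{\DD_4}}(2\to 1)=2$ while the corresponding space in $\ol{\GG_2(q_4)}$ is one-dimensional. Were $\iota$ fully faithful it would be an equivalence (it is already essentially surjective, since the image of the generator $\otimes$-generates), forcing $R(\unit)\cong\unit$ and collapsing the argument. You only need, and only have, a faithful monoidal functor whose right adjoint is faithful exact.

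Second, your justification of commutativity via Lemma~\ref{lem:general-half-braid} is circular: that lemma is stated for $\CC_A$ with $A$ already etale, so it cannot be invoked to establish commutativity of $R(\unit)$. The paper does not verify the etale property by hand either; it is packaged into the cited result behind Lemma~\ref{lem:exact-functor}, and you should simply defer to that. Relatedly, your nontriviality argument (``$g_4$ is a new simple not in the essential image of $\iota$'') is incomplete as written, since you never exclude $g_4 \cong \iota(X)$ for some simple $X$. The paper's argument here is cleaner and worth adopting: it traces the generator around the commuting square and directly computes
\[
\dim\Hom_{\ol{\DD_4}}(2\to 1)=2,
\]
which would equal $1$ if $B_4\cong\unit$. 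This is exactly the failure of fullness noted above, turned into the decisive inequality.
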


We first note that $\Ab(\ol{\DD_4})$ is the category of modules for {\it some} etale algebra.

\begin{proposition}
    There is an etale algebra $B_4$ such that 
    \[
        \Ab(\ol{\DD_4}) \cong \ol{ \Rep(U_{q_4}(\gg_2))}_{B_4}.
    \]
\end{proposition}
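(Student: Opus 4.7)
The plan is to apply Lemma~\ref{lem:exact-functor} to the monoidal embedding
\[
    \FF_4 : \ol{\Rep(U_{q_4}(\gg_2))} \hookrightarrow \Ab(\ol{\DD_4})
\]
obtained by Cauchy-completing the $\dagger$-embedding $\ol{\GG_2(q_4)} \hookrightarrow \ol{\DD_4}$ of Corollary~\ref{cor:D4-dagger-emb} and using the identification $\ol{\Rep(U_{q_4}(\gg_2))} \cong \Ab(\ol{\GG_2(q_4)})$. Once the hypotheses of that lemma are in place, the resulting $B_4 := R(\unit)$ carries a canonical algebra structure and one only needs to verify that this algebra is etale.

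First I would show that $\Ab(\ol{\DD_4})$ is a finite semisimple rigid $\C$-linear tensor category with simple unit. Semisimplicity, rigidity, and simplicity of the unit are built into the construction, the last using Corollary~\ref{cor:Zn-simple-unit}. For finiteness of the set of isomorphism classes of simples, observe that the tensor generator of $\DD_4$ lies in the image of $\FF_4$, so every simple of $\Ab(\ol{\DD_4})$ is a direct summand of $\FF_4(X)$ for some simple $X \in \ol{\Rep(U_{q_4}(\gg_2))}$; since the latter is a fusion category, this forces finitely many simples on the target. In this finite semisimple setting $\FF_4$ admits an explicit right adjoint $R$ (assembled from hom-space multiplicities against simples), which is automatically exact. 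The dominance just established makes the counit $\FF_4 R \Rightarrow \id$ a split epimorphism on every object, which is equivalent to $R$ being faithful.

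Applying Lemma~\ref{lem:exact-functor} then yields the equivalence
\[
    \KK : \Ab(\ol{\DD_4}) \xrightarrow{\ \sim\ } \ol{\Rep(U_{q_4}(\gg_2))}_{B_4},
    \qquad B_4 := R(\unit),
\]
with $B_4$ inheriting an algebra structure from the lax monoidal structure on $R$. It remains to verify that $B_4$ is etale. Separability is automatic: $\ol{\Rep(U_{q_4}(\gg_2))}_{B_4} \cong \Ab(\ol{\DD_4})$ is semisimple, and in a braided fusion category this forces separability of the underlying algebra. Commutativity of $B_4$ is the main subtlety; I expect to obtain it by applying Lemma~\ref{lem:general-half-braid} to the multiplication $\mu : B_4 \otimes B_4 \to B_4$, viewed via $\KK$ as a morphism in $\Ab(\ol{\DD_4})$ between objects in the image of the free functor $\FF_{B_4}$, so that the resulting half-braid identity translates into exactly $\mu \circ c_{B_4,B_4} = \mu$. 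The main obstacle is this commutativity step; everything else reduces to standard semisimple-category manipulations once finite semisimplicity of $\Ab(\ol{\DD_4})$ is in hand.
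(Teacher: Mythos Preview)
Your overall strategy coincides with the paper's: Cauchy-complete the $\dagger$-embedding of Corollary~\ref{cor:D4-dagger-emb}, identify the source with $\ol{\Rep(U_{q_4}(\gg_2))}$, and invoke Lemma~\ref{lem:exact-functor}. The paper's proof is terser than yours---it simply asserts that the induced functor on Cauchy completions is faithful exact and that the resulting $B_4$ is etale, without spelling out the finiteness, adjoint, or etale verifications you outline. Your added detail on dominance and the construction of the right adjoint is sound.

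The gap is in your commutativity argument. Lemma~\ref{lem:general-half-braid} has ``$A$ etale'' as a \emph{hypothesis}, so invoking it to deduce that $B_4$ is commutative is circular. There is also a type mismatch: the multiplication $\mu:B_4\otimes B_4\to B_4$ is a morphism in $\ol{\Rep(U_{q_4}(\gg_2))}$, not in its category of $B_4$-modules, and under the equivalence $\KK$ the object $B_4=\FF_{B_4}(\unit)$ corresponds to the tensor unit of $\Ab(\ol{\DD_4})$, where there is no nontrivial multiplication to half-braid. The half-braid relation does enter, but in a different way: since it holds for the generators in $\GPA(\Gamma_4)$ (Theorem~\ref{thm:init-D4-GPA-emb}) and the embedding $\ol{\DD_4}\hookrightarrow\GPA(\Gamma_4)$ is faithful, it holds in $\ol{\DD_4}$ itself; this equips $\FF_4$ with the structure of a \emph{central} tensor functor, and commutativity of $R(\unit)$ then follows from the standard correspondence between central functors and commutative algebras in \cite{DMNO}. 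The paper does not spell this step out either, so your argument is no less complete than the original on this point---but the specific mechanism you propose would not work as written.
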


\begin{proof}
    Recall the $\dagger$-embedding $\ol{\GG_2(q_4)} \hookrightarrow \ol{\DD_4}$ of Corollary~\ref{cor:D4-dagger-emb}.
    From here we take Karoubi completion, which induces a functor 
    \[
        \Ab( \ol{\GG_2(q_4)} ) \hookrightarrow \Ab( \ol{\DD_4} )
    \]
    which is faithful exact.
    By Lemma~\ref{lem:exact-functor}, we deduce that
    \[
        \Ab(\ol{\DD_4}) \cong \Ab( \ol{\GG_2(q_4)} )_{B_4}
    \]
    for some etale algebra object $B_4$.
    But it is well known \cite{Kuperberg} that 
    \[
        \Ab( \ol{\GG_2(q_4)} ) \cong \ol{ \Rep(U_{q_4}(\gg_2))}.
    \]
\end{proof}

Now we are in a position to prove Theorem~\ref{thm:level-4}.

\begin{proof}[Proof of Theorem~\ref{thm:level-4}]
    As we noted before, the only candidates for $B_4$ are $A_4$ and $\unit$.
    Thus it will suffice to demonstrate that 
    \[
        \Ab(\ol{\DD_4}) \not\cong \ol{ \Rep(U_{q_4}(\gg_2))} \cong \ol{ \Rep(U_{q_4}(\gg_2))}_{\unit}.
    \]
    
    The induced functor $\Ab( \ol{\GG_2(q_4)} ) \hookrightarrow \Ab( \ol{\DD_4} )$, whose existence we noted above,
    is defined to be such that the following diagram commutes:
    \[
    \xymatrix@C=60pt@R=45pt{
    \ol{\GG_2(q_4)} \ar@{^{(}->}[r]^{} \ar@{^{(}->}[d]^{} & \ol{\DD_4} \ar@{^{(}->}[d]^{\JJ} \\
    \ol{ \Rep(U_{q_4}(\gg_2))} \ar@{^{(}->}[r]^{} & \Ab( \ol{\DD_4} )
     }
    \]
    By Lemma~\ref{lem:exact-functor} there is an equivalence $\KK$ such that, up to isomorphism, the diagram
    \[
    \xymatrix@C=60pt@R=45pt{
    \ol{\GG_2(q_4)} \ar@{^{(}->}[r]^{} \ar@{^{(}->}[d]^{} & \ol{\DD_4} \ar@{^{(}->}[d]^{\JJ} \\
    \ol{ \Rep(U_{q_4}(\gg_2))} \ar@{^{(}->}[r]^{} \ar[dr]^{\FF_{B_4}} & \Ab( \ol{\DD_4} ) \ar[d]^{\KK} \\
      & \ol{ \Rep(U_{q_4}(\gg_2))}_{B_4} \\
     }
    \]
    commutes.

    Now, chasing the object $1_{ \ol{\GG_2(q_4)}}$ through the leftmost path on this diagram, we have 
    \[
        1_{ \ol{\GG_2(q_4)}} \mapsto V_{\Lambda_1} \mapsto B_4 \otimes V_{\Lambda_1}.
    \]
    If it was true that $B_4\cong \unit$, we would have 
    \[
        \dim \Hom_{\ol{ \Rep(U_{q_4}(\gg_2))}_{B_4}}((B_4 \otimes V_{\Lambda_1})^{\otimes 2} \to B_4 \otimes V_{\Lambda_1}) = 1.
    \]
    Now let us chase the object $1$ along the top rightmost path to demonstrate that this is not the case.
    Along this second path we have
    \[
        1_{ \ol{\GG_2(q_4)}} \mapsto 1_{\ol{\DD_4}} \mapsto \JJ(1_{\ol{\DD_4}}) \mapsto \KK\circ\JJ(1_{\ol{\DD_4}}).
    \]
    Set $\CC_4 \coloneq \ol{ \Rep(U_{q_4}(\gg_2))}$.
    Now we note that
    \begin{align*}
        \dim \Hom_{(\CC_4)_{B_4}}((B_4 \otimes V_{\Lambda_1})^{\otimes 2} \to B_4 \otimes V_{\Lambda_1}) & = 
            \dim \Hom_{(\CC_4)_{B_4}}((\KK\circ\JJ(1_{\ol{\DD_4}}))^{\otimes 2} \to \KK\circ\JJ(1_{\ol{\DD_4}})) \\
        & = \dim \Hom_{\Ab( \ol{\DD_4} )}((\JJ(1_{\ol{\DD_4}}))^{\otimes 2} \to \JJ(1_{\ol{\DD_4}})) \\
        & = \dim \Hom_{\ol{\DD_4} }( (1_{\ol{\DD_4}})^{\otimes 2} \to 1_{\ol{\DD_4}}) \\
        & = \dim \Hom_{\ol{\DD_4} }( 2_{\ol{\DD_4}} \to 1_{\ol{\DD_4}}) \\
        & = 2.
    \end{align*}
    Since $B_4 \not\cong\unit$, we must have $B_4 \cong A_4$; the theorem is proved.
\end{proof}

The category $\DD_3$, whose $\Z_3$-like structure constants are given in the Mathematica notebook
\verb|/code/level-3/nb-6_verify-g.nb|, admits an analogous theorem at level 3.

\begin{theorem}\label{thm:level-3}
    There is a monoidal equivalence
    \[
        \Ab(\ol{\DD_3}) \cong \ol{ \Rep(U_{q_3}(\gg_2))}_{A_3}.
    \]
\end{theorem}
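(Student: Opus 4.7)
The plan is to reproduce the proof of Theorem~\ref{thm:level-4} essentially verbatim, with $n = 3$ replacing $n = 2$ throughout. First, starting from the $\dagger$-embedding $\ol{\GG_2(q_3)} \hookrightarrow \ol{\DD_3}$ of Corollary~\ref{cor:D3-dagger-emb}, I would pass to Karoubi completions to obtain a faithful exact monoidal embedding
\[
    \Ab(\ol{\GG_2(q_3)}) \hookrightarrow \Ab(\ol{\DD_3}).
\]
Combining Lemma~\ref{lem:exact-functor} with the Kuperberg equivalence $\Ab(\ol{\GG_2(q_3)}) \cong \ol{\Rep(U_{q_3}(\gg_2))}$ then yields some etale algebra $B_3$ in $\ol{\Rep(U_{q_3}(\gg_2))}$ together with an equivalence $\Ab(\ol{\DD_3}) \cong \ol{\Rep(U_{q_3}(\gg_2))}_{B_3}$.

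The classification of etale algebras in $\ol{\Rep(U_{q_3}(\gg_2))}$ recorded from \cite{gannon_exotic_q_subgroups_extensions-II} at the start of Section~\ref{sec:equivalences} leaves only the two candidates $B_3 \cong \unit$ and $B_3 \cong A_3$. To rule out the former, I would run the same diagram chase as in the level-4 argument, using the commuting square supplied by Lemma~\ref{lem:exact-functor} to deduce
\[
    \dim \Hom_{\ol{\Rep(U_{q_3}(\gg_2))}_{B_3}}\bigl((B_3 \otimes V_{\Lambda_1})^{\otimes 2} \to B_3 \otimes V_{\Lambda_1}\bigr) = \dim \Hom_{\ol{\DD_3}}(2_{\ol{\DD_3}} \to 1_{\ol{\DD_3}}).
\]
If $B_3 \cong \unit$ the left side collapses to the one-dimensional space of trivalent vertices in $\GG_2(q_3)$, so any strict lower bound of $2$ on the right side delivers the contradiction and forces $B_3 \cong A_3$.

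The one nontrivial ingredient, and therefore the main obstacle, is producing that lower bound: one must show that the three decorated diagrams $\skein{/skein_figs/triv_rightUp_i}{0.075}$ for $i = 0, 1, 2$, which span $\Hom_{\DD_3}(2 \to 1)$ by Definition~\ref{def:zn-ext}, remain linearly independent after passing to the semisimple quotient $\ol{\DD_3}$. This is handled by the GPA embedding of Theorem~\ref{thm:D3-GPA-emb}: the induced functor $\ol{\DD_3} \hookrightarrow \GPA(\Gamma_3)$ obtained by applying Lemma~\ref{lem:simple-unit-unitary} to the combined data of Theorems~\ref{thm:G2-level3-GPA-emb} and \ref{thm:D3-GPA-emb} is faithful, so it suffices to verify linear independence of the three images in the $(2\to 1)$ hom-space of $\GPA(\Gamma_3)$. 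This is a routine rank computation in the accompanying Mathematica notebook, and once in hand it yields $\dim \Hom_{\ol{\DD_3}}(2 \to 1) = 3$, eliminating $B_3 \cong \unit$ and completing the proof.
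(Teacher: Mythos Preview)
Your proposal is correct and follows the paper's own approach essentially verbatim: the paper simply states that the proof of Theorem~\ref{thm:level-3} is analogous to that of Theorem~\ref{thm:level-4}, and your write-up is exactly that analogy carried out. If anything, you are more careful than the paper in isolating the one point that needs verification---namely that $\dim \Hom_{\ol{\DD_3}}(2\to 1)>1$ survives passage to the semisimple quotient---and in explaining why the faithful GPA embedding settles it; the paper's level-4 argument asserts the corresponding dimension equals $2$ without further comment.
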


The proof of Theorem~\ref{thm:level-3} is analogous to the argument given above.

\pagebreak

\printbibliography

\end{document}